\definecolor{jwcol}{rgb}{0,0,0.8}
\newcommand{\jwnew}[1]{#1}
\newcommand*{\Scale}[2][4]{\scalebox{#1}{\ensuremath{#2}}}%
\newcommand{\hlh}{h \log (1/h)}
\newcommand{\hamvf}{\mathsf{H}}
\newcommand{\restrictedto}{\lvert}
\DeclareMathOperator{\dist}{dist}
\DeclareMathOperator{\Id}{Id}
\DeclareMathOperator{\sgn}{sgn}
\DeclareMathOperator{\Diff}{Diff}
\newcommand{\ext}{\mathcal{E}}
\newcommand{\res}{\mathcal{R}}
\newcommand{\len}{\mathcal{\ell}}
\newcommand{\bs}{\mathbf{s}}
\newcommand{\br}{\mathbf{r}}
\newcommand{\cutoff}{\chi}
\newcommand{\bmD}{\mathbf{D}}
\newcommand{\loc}{\text{loc}}
\newcommand{\ta}{\widetilde{a}}
\renewcommand{\leq}{\leqslant}
\renewcommand{\geq}{\geqslant}
\newcommand{\eps}{\varepsilon}
\newcommand{\Vcal}{\mathcal{V}}
\newcommand{\Mcal}{\mathcal{M}}
\newcommand{\hsh}{\frac{h}{\sqrt{z_h}}}
\newcommand{\zsh}{\frac{\sqrt{z_h}}{h}}
\newcommand{\Pit}{\widetilde{\Pi}}
\thanks{{The authors are grateful to Colin Guillarmou, Nicolas Burq,
  and Maciej Zworski
  for helpful conversations, and to an anonymous referee for
  comments on the manuscript.} The second author thanks Institut Henri
  Poincar\'e and Universit\'e de Paris Sud for their hospitality.  The
  The
  work of L.H. is partially supported by the ANR 
program ANR-13-BS01-0007-01 GERASIC and J.W. acknowledges 
support from NSF grants DMS--1265568 and DMS--1600023.}
\title[Resonances generated by cone points]{On resonances
  generated by conic diffraction}
\author{Luc Hillairet}
\author{Jared Wunsch}
\date{\today}
\begin{document}
\begin{abstract}
We describe the resonances closest to the real axis generated by diffraction of waves among cone
points on a manifold with Euclidean ends. These resonances lie asymptotically evenly
spaced along a curve of the form $$\frac{\Im \lambda}{\log \left |\Re
  \lambda\right |}= -\nu;$$ here
$\nu=(n-1)/2 L_0$ where $n$ is the dimension and $L_0$ is the length of
the longest geodesic connecting two cone points.  Moreover there
are asymptotically no resonances below this curve and above the curve
$$
\frac{\Im \lambda}{\log \left |\Re
  \lambda\right |}= -\Lambda
$$
for a fixed $\Lambda>\nu.$
\end{abstract}
\maketitle
\section{Introduction}
Let $X^n$ be a manifold with cone points $Y_1,\dots Y_N$ and with
Euclidean ends.  We make the geometric assumption that there are no
trapped geodesics that do not hit the cone points, and that there are
a finite number of geodesics $\gamma_{ij}^k$ connecting cone points
$Y_i$ and $Y_j$ for each $i,j$ (with possibly more than one geodesic
connecting a given pair, hence the index $k$).  We further assume that
no two endpoints of any pair of these geodesics $y,y'$ are
$\pi$-related; loosely speaking, this means that no three cone points
are collinear (see \S\ref{section:geometry} below for precise
definitions).

Let $L_0$ denote the longest geodesic connecting two (not a priori
distinct) cone points.  Assume that no two distinct oriented geodesics between cone points
of maximal length $L_0$ end at a common cone point\footnote{Note that this
  hypothesis rules out a geodesic loop of length $L_0$ from a cone
  point to itself, since the geodesic and its reversal end at the same
  point.}  (e.g., one could
assume that there is a unique geodesic of length $L_0$ and it connects
two distinct cone points).  Thus the longest path(s) along two
successive conic geodesics is (are) obtained by traversing back and
forth along a single geodesic, resulting in length $2L_0.$ Let $2L'$
denote the length of the next longest path traversing two successive
geodesics connecting cone points.

The main result of this paper is:
\begin{theorem}\label{theorem:gap}
Let $\lambda_j$ be a sequence of resonances of the Laplacian on a
conic manifold $X$ subject to the geometric hypotheses above, with
$$
-\frac{\Im \lambda_j}{\log \smallabs{\Re \lambda j}} \to \nu.
$$
Then either $\nu=(n-1)/2L_0$ or $\nu\geq \Lambda$ where
$$
\Lambda= \min\{ n/(2L_0), (n-1)/(2L')\}.
$$
More precisely, for a sequence of resonances satisfying
$$
\limsup \bigg(-\frac{\Im \lambda_j}{\log \smallabs{\Re \lambda j}}\bigg)< \Lambda
$$
there exist $\delta>0$ and a constant $C_{\Im}$ such that, {up to extracting a subsequence,}
$$
\Im \lambda_j = -\frac{(n-1)}{2 L_0} \log \smallabs{\Re \lambda_j} +C_{\Im}+ O(\abs{\Re \lambda_j}^{-\delta});
$$
also there exists $C_{\Re}$ such that the following quantization condition holds
\begin{equation}\label{quantization}
\Re \lambda_j \in C_{\Re}+\frac{\pi}{L_0} \ZZ+O(\abs{\Re \lambda_j}^{-\delta}).
\end{equation}
This latter condition should be interpreted as saying the resonances
have real parts in the union of the intervals $B(C_{\Re}+\pi k/L_0,
C \smallabs{k}^{-\delta})$ for $k \in \ZZ$ and for some fixed $C.$

{Moreover, there is only a finite number of pairs $(C_{\Re},C_{\Im})$: one for each geodesic
    of length $L_0$.}
\end{theorem}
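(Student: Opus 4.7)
The strategy is to reduce the resonance problem to locating the zeros of a finite-dimensional determinant built from the diffractive coefficients at the cone points. First I would construct a global parametrix for the outgoing resolvent $R(\lambda) = (\Delta - \lambda^2)^{-1}$ by patching together the free resolvent on each Euclidean end, the non-trapping propagator away from the cone points (which is well-behaved since all non-conic geodesics escape by hypothesis), and the Cheeger--Taylor functional calculus near each $Y_i$, which propagates singularities diffractively. The error of this parametrix is a compact operator whose invertibility is governed by a finite-dimensional effective ``bounce'' operator $T(\lambda)$ acting on the space of angular data at the cone points. The entries of $T(\lambda)$ take the form (outgoing diffractive amplitude at $Y_i$) $\times\, e^{i\lambda \ell_{ij}^k}\, \times$ (incoming amplitude at $Y_j$), indexed by the geodesics $\gamma_{ij}^k$ connecting pairs of cone points, and the resonances appear as zeros of $\det(I - T(\lambda))$.

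Next I would extract the asymptotics of $T(\lambda)$ at large $|\lambda|$. The diffractive coefficient at a cone point, for non-glancing incoming/outgoing directions $\omega,\omega'$, admits a classical expansion in powers of $\lambda^{-1/2}$ with principal term $c_0(\omega,\omega')\lambda^{-(n-1)/2}$ and subprincipal correction of order $\lambda^{-n/2}$. Combined with the free phase, each entry takes the form
\[
  T_{ij}^k(\lambda) = a_{ij}^k\, \lambda^{-(n-1)/2} e^{i\lambda \ell_{ij}^k}\bigl(1 + O(\lambda^{-1/2})\bigr),
\]
uniformly in strips where $|\Im\lambda| = O(\log|\Re\lambda|)$. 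The hypothesis that no two distinct oriented length-$L_0$ geodesics share an endpoint forces the maximal contributions to $T^2$ to be exactly the back-and-forth trips along a maximal geodesic, where the amplitude is of size $|\lambda|^{-(n-1)} e^{2i\lambda L_0}$. Every other contribution to $T^2$ carries either a subprincipal factor $\lambda^{-1/2}$ (yielding size $|\lambda|^{-n} e^{2i\lambda L_0}$) or a shorter phase $e^{2i\lambda L'}$ (yielding $|\lambda|^{-(n-1)} e^{2i\lambda L'}$); and higher powers $T^k$ with $k\geq 3$ contribute at order $|\lambda|^{-k(n-1)/2} e^{ikL_0\Im\lambda}$.

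In the strip $-\Lambda\log|\Re\lambda| < \Im\lambda < -\nu\log|\Re\lambda| + M$, the principal term $|\lambda|^{-(n-1)} e^{2i\lambda L_0}$ is $O(1)$ while every competing contribution listed above is $O(|\lambda|^{-\delta})$, precisely by the choice $\Lambda = \min\{n/(2L_0),(n-1)/(2L')\}$. Thus inside this strip $T^2$ equals a low-rank block plus an $O(|\lambda|^{-\delta})$ remainder, and $\det(I - T(\lambda)) = 0$ reduces to the scalar equation $1 - \tilde a(\lambda)|\lambda|^{-(n-1)} e^{2i\lambda L_0} = 0$ with $\tilde a(\lambda)$ admitting an expansion in $\lambda^{-1/2}$. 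Taking logarithms of modulus and argument separately, and applying a Rouch\'e-type argument, gives $\Im\lambda_j = -(n-1)/(2L_0)\log|\Re\lambda_j| + C_\Im + O(|\Re\lambda_j|^{-\delta})$ and $\Re\lambda_j \in C_\Re + (\pi/L_0)\mathbb{Z} + O(|\Re\lambda_j|^{-\delta})$, as asserted.

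The main obstacle I foresee is making the parametrix construction and the expansion of $T(\lambda)$ uniform in a strip of the lower half-plane whose width grows like $\log|\Re\lambda|$. The free propagator grows like $e^{|\Im\lambda|\ell}$ along a segment of length $\ell$, so the smoothing and remainder estimates supporting the parametrix must be extraordinarily sharp: one needs uniform control on the remainder in the Cheeger--Taylor diffractive calculus, on stationary-phase expansions at each cone point, and on the analytic continuation of the free Euclidean resolvent into the strip, all at this precision. Equally delicate is the combinatorial estimate showing that the multi-bounce contributions $T^k$ for $k\geq 3$ and the off-diagonal pieces of $T^2$ coming from sub-maximal geodesics remain strictly subdominant throughout the strip; this is where the ``no two endpoints $\pi$-related'' hypothesis enters, ruling out accidental phase coincidences that could in principle conspire to generate resonances above the line $\Im\lambda = -\Lambda\log|\Re\lambda|$.
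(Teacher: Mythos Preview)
Your proposal shares the paper's core structural idea---a monodromy/transfer operator whose entries carry factors $\lambda^{-(n-1)/2}e^{i\lambda\ell}$, squared so that the dominant block comes from back-and-forth along a maximal geodesic---but the technical framework is genuinely different. The paper does \emph{not} build a resolvent parametrix. Instead it makes a semiclassical rescaling ($h=(\Re\lambda)^{-1}$), applies complex scaling at the small angle $\theta\sim M h\log(1/h)$ so that resonances become $L^2$ eigenvalues of a non-self-adjoint $P_\theta$, and then uses propagation of semiclassical wavefront set at \emph{variable order} (regularity lost at rate $2\nu t$ under forward flow, gained in the deep scaling region) together with the Ford--Wunsch conormal structure of the diffractive half-wave propagator. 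This shows a resonant state must have wavefront set on the finitely many geodesics $\Gamma$ between cone points; the monodromy then acts on restrictions of $u_h$ to cross-sections $S_e$ of those geodesics, further reduced to finite dimension by passing to Taylor jets at the intersection points. Your resolvent-parametrix route is a legitimate alternative (closer in spirit to Burq's analytic-corner work and to Galkowski), but the paper's machinery sidesteps exactly the uniformity problem you flag as the main obstacle: the small-angle complex scaling keeps propagation bidirectional with only polynomial losses in $h$, so one never needs sharp remainder control on an analytically continued free resolvent in a log-width strip.

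Two small corrections. First, your reduction to a \emph{finite}-dimensional $T(\lambda)$ ``on angular data at the cone points'' is not explained; the angular data space is the link $Y_\alpha$, which is infinite-dimensional, and the paper's mechanism for finite-dimensionality (jets of the cross-sectional restrictions, with the stationary-phase lemma showing higher jets contribute extra powers of $h$) is a substantive step you would need to supply. Second, the ``no $\pi$-related endpoints'' hypothesis does not enter to rule out phase coincidences; it ensures that the concatenated geodesics in $\Gamma$ are \emph{strictly} diffractive rather than geometric, so that the diffractive propagator has the clean conormal form with the $\lambda^{-(n-1)/2}$ amplitude you use. The combinatorial control of off-diagonal and higher-power terms comes instead from the separate hypothesis that no two maximal oriented geodesics share an endpoint.
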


The constants $C_{\Re}$ and $C_{\Im}$ are geometrically
meaningful: they are related to the imaginary and real parts
respectively of the logarithm of the product of diffraction
coefficients along {the corresponding} maximal length closed diffractive geodesic
undergoing two diffractions. It also follows from this description
that if the diffraction coefficient along {every maximal} geodesic vanishes then,
for any sequence of resonances  
$$
\limsup \bigg(-\frac{\Im \lambda_j}{\log \smallabs{\Re \lambda_j}}\bigg) \geq  \Lambda.
$$

\jwnew{The theorem applies via the method of images to the Dirichlet or
Neumann problem on the exterior of
one or more polygons in the plane, via a ``doubling'' in which
two copies of the exterior domain are sewn together along their common
edges to make a manifold with cone points---see e.g.\cite[Section 1]{Hillairet:2005},
\cite[Section
1]{BaWu:13}.  As long as no three vertices are collinear, the
collinearity assumption is satisfied; at most one geodesic connects two
cone points;
nontrapping is an open condition (and always satisfied for the exterior of
a single convex polygon); and the longest path condition certainly holds if,
say, no two pairs of vertices are the same distance apart.  Our
geometric hypotheses are thus generically satisfied in the polygonal case (once
nontrapping is stipulated); we expect them to be generic in the more
general nontrapping conic setting as well.}

Our main theorem consists of a upper bound for the locations of resonances of
$X$ lying in a log neighborhood of the real axis (albeit without
multiplicity bounds), implying that \jwnew{resonances in this region
  can only concentrate on a log curve.}  We recall that previous
work of Baskin--Wunsch \cite{BaWu:13}
Galkowski \cite{Ga:15} showed on the one hand \cite{BaWu:13} that
\emph{some} region of the form
$$
-\frac{\Im \lambda_j}{\log \smallabs{\Re \lambda_j}} >\nu_0>0,\quad \abs{\lambda_j}>R
$$
contains no resonances (subject to some genericity conditions on the
relationship among the 
conic singularities); it was more precisely observed by Galkowski \cite{Ga:15} that $\nu_0$
could be taken to be $(n-1)/2L_0+\epsilon.$  On the other hand, the
authors and Galkowski showed\footnote{\jwnew{This theorem was proved by the authors in the
  odd dimensional case and announced by the second author, with a
  sketch of the proof, at the Berkeley/Bonn/Paris Nord/Z\"urich PDE Video
  Seminar in September 2015.  Galkowski subsequently published a proof covering both even
  and odd dimensional cases (citing the authors) as Theorem 5 of \cite{Ga:15}.}}
    the following \emph{existence} theorem for resonances using a trace formula:
\begin{theorem} Assume there is a single maximal orbit of length
  $2L_0$ undergoing two diffractions with nonvanishing diffraction coefficients, whose iterates are all isolated in the length
  spectrum.  Then for
  every $\ep>0,$
$$
\# \big\{\lambda_j: -\frac{\Im \lambda_j}{\log \smallabs{\Re
    \lambda_j}}<\frac{n-1}{2L_0}+\ep\big\}\cap B(0,r) >C_\ep r^{1-\ep}.
$$
\end{theorem}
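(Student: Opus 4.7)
The plan is a trace-formula argument: one relates resonances of $\Delta$ on $X$ to singularities of the wave trace at closed-geodesic lengths via a Birman--Krein-type identity, then extracts existence from the contribution of iterates of the maximal orbit. Let $\gamma_0$ denote the maximal closed diffractive orbit, of length $2L_0$ with two diffractions and with nonzero product of diffraction coefficients $\mathcal{D}$; its $N$-fold iterate $\gamma_0^N$ has length $2NL_0$ and undergoes $2N$ diffractions. Fix $\rho_0\in\mathcal{S}$ with $\widehat{\rho_0}$ compactly supported in $[-a,a]$ near $0$ and not identically vanishing, and for large $r$ and a large integer $N$ set
\begin{equation*}
\rho_{N,r}(\lambda) = \rho_0(\lambda-r)\,e^{i\,2NL_0\,\lambda},
\end{equation*}
so that $\widehat{\rho_{N,r}}$ concentrates near $t=2NL_0$ and the spectral pairing $\sum_j \rho_{N,r}(\lambda_j)$ localizes at resonances with $\Re\lambda_j$ close to $r$.

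A regularized wave trace formula on $X$ equates this spectral pairing to a sum over closed (possibly diffractive) geodesics of length near $2NL_0$, up to smoothing error. Under the isolation hypothesis only $\gamma_0^N$ contributes at leading order, and propagating through its $2N$ cone points yields, for a nonzero constant $c_n$ and exponents $\alpha_n,\beta_n$ depending only on the dimension,
\begin{equation*}
\sum_j \rho_{N,r}(\lambda_j) = c_n\,\mathcal{D}^{2N}\,r^{\alpha_n-N\beta_n} + (\text{lower order}),
\end{equation*}
the factor $r^{-\beta_n}$ per diffractive iterate reflecting the high-frequency decay of the diffraction amplitude at a single cone point. On the other hand, by Paley--Wiener $|\rho_{N,r}(\lambda_j)|\leq C e^{(a+2NL_0)|\Im\lambda_j|}$, so if within a window $|\Re\lambda_j-r|\leq a$ all resonances satisfied $-\Im\lambda_j > ((n-1)/(2L_0)+\eps)\log r$, the spectral sum would be too small to balance the geometric side. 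Choosing $N$ with $2NL_0\asymp\log r$ and iterating over a net of centers $r'\in(0,r)$ then forces at least $C_\eps r^{1-\eps}$ resonances in $B(0,r)$ to lie below this log curve; the exponent matching in the balance condition is precisely what produces $(n-1)/(2L_0)$ as the critical slope.

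The main technical obstacle is the construction of a usable trace formula on $X$: one must join black-box resonance theory at the Euclidean ends with the full diffractive calculus at the cone points, producing a parametrix whose principal symbol along $\gamma_0^N$ is computable uniformly in both $N$ and $r$ with truly negligible remainders. The nontrapping hypothesis for smooth geodesics eliminates contributions from the smooth geodesic flow, while the length-spectrum isolation hypothesis ensures that at each iterate length $2NL_0$ the only contributing closed orbit is $\gamma_0^N$, so that the geometric side of the trace formula reduces to the single explicit term driving the whole argument.
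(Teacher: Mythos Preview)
Your framework is the right one and matches what the paper sketches: one combines (i) the diffractive wave-trace formula of Ford--Wunsch/Hillairet giving the principal symbol of the singularity of the regularized trace at $t=2L_0$, (ii) the Poisson-type identity of Bardos--Guillot--Ralston/Melrose/Sj\"ostrand--Zworski relating that trace to a sum over resonances, and (iii) the Tauberian theorem of Sj\"ostrand--Zworski. The paper does not give a proof (it cites Galkowski), but that three-step outline is exactly what it describes.

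However, your implementation has a genuine error in the balancing step. The Paley--Wiener bound you wrote, $|\rho_{N,r}(\lambda_j)|\leq C e^{(a+2NL_0)|\Im\lambda_j|}$, is \emph{large} when $|\Im\lambda_j|$ is large, so it cannot yield ``the spectral sum would be too small'' for deep resonances. The sign has to run the other way: in the Poisson formula the trace for $t>0$ is built from terms $e^{-i\lambda_j t}$, and for $t$ supported near $2L_0$ one gets $|e^{-i\lambda_j t}|=e^{t\Im\lambda_j}\leq e^{-(2L_0-a)|\Im\lambda_j|}$, which is \emph{small} for deep $\lambda_j$. It is this decay that forces shallow resonances to carry the singularity. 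Even with the sign fixed you still need a priori polynomial upper bounds on the resonance counting function in logarithmic strips to make the sum over all $\lambda_j$ converge and to control contributions away from $\Re\lambda_j\approx r$; this is precisely what the Sj\"ostrand--Zworski Tauberian theorem packages, and you should invoke it rather than attempt an ad hoc balance.

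Finally, the device of letting $N\asymp\log r$ is both unnecessary and problematic. The exponent $(n-1)/(2L_0)$ already falls out of the \emph{first} iterate: the diffractive singularity at $t=2L_0$ has strength $r^{-(n-1)}$ after localizing in frequency, while shallow resonances with $|\Im\lambda_j|\leq\nu\log|\Re\lambda_j|$ contribute at worst $r^{-2L_0\nu+}$, and matching gives $\nu=(n-1)/(2L_0)$. Taking $N\to\infty$ would require uniform-in-$N$ control of the parametrix and of the remainder in the trace formula, which you flag as an obstacle but which is in fact not needed for the result.
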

The proof employs a trace formula of Ford--Wunsch \cite{FoWu:17}
(previously proved by
the first author \cite{Hillairet:2005} in the case of flat surfaces)
describing the singularities of the wave trace induced by diffractive
closed orbits, together with a theorem relating resonances to the
renormalized wave trace due to Bardos-Guillot-Ralston \cite{Bardos-Guillot-Ralston1}, Melrose
\cite{MR83j:35128}, and Sj\"ostrand-Zworski \cite{Sjostrand-Zworski5}
(as well as \cite{Zw:99} for the even dimensional case). It also uses
a Tauberian theorem of Sj\"ostrand--Zworski \cite{Sjostrand-Zworski3}.
  As a proof has appeared in \cite{Ga:15} (see also \cite{Wu:16}) we
  do not give one here.
 
Thus, previous results implied that infinitely many resonances lie
in any logarithmic ``strip''
$$\abs{\frac{\Im \lambda}{\log \smallabs{\Re
    \lambda}}+\frac{n-1}{2L_0}}<\ep.
$$
The results at hand sharpen this result by pushing down further into
the complex plane: we now know that below this first (approximate)
logarithmic curve of resonances there is a gap region.

\jwnew{All this is in marked contrast with the case of a
  non-trapping smooth manifold with Euclidean ends, where classic
  results of Lax-Phillips \cite{Lax-Phillips1} and Vainberg
\cite{Vainberg:Exterior,Vainberg:Asymptotic} show that for for every
$\nu>0,$ there exists $R>0$ so that the region
$$
\Im \lambda>-\nu \log \smallabs{\Re\lambda},\quad \smallabs{\lambda}>R
$$
contains no resonances at all.  The existence of resonances along log
curves is thus a consequence of the \emph{weak trapping} effects of
repeated diffraction at the cone points (see discussion below).  Our results therefore occupy a middle
ground between the smooth nontrapping case and the case of a smooth
manifold with trapped geodesics, where no matter how unstable the
structure of the trapped set, there seem to be resonances
closer to the real axis than those studied here: for instance there are now
numerous results about the existence of resonances lying near lines
parallel to the real axis, generated by normally hyperbolic
trapping---cf.\ \cite{Dy:16}.  (That sequences of resonances
should \emph{always} exist in some strip near the real axis in cases
of trapping on a smooth manifold with Euclidean ends is the content of
the modified Lax--Phillips conjecture.)

Previous results on
strings of resonances on log curves as in Theorem~\ref{theorem:gap} include the
much more precise study in \cite{Burq:Coin} of the related special
case of one orbit bouncing back and forth between an analytic corner and a
wall. The treatise \cite{BoFuRaZe:16} contains similar (and highly
refined) results in the setting of resonances generated by homoclinic
orbits; such resonances are somewhat closer to the real axis than those
discussed here, but the variable-order propagation of singularities
techniques used below also appear in \cite{BoFuRaZe:16}.

The appearance of the factor $(n-1)/(2L_0)$ in our main theorem is quite
natural from a dynamical point of view; after a semiclassical
rescaling of the problem, we show that it represents the minimal
``rate of smoothing'' enjoyed by a solution to the semiclassical
Schr\"odinger equation owing to its diffraction by cone points.
More precisely, consider a putative semiclassical resonant state
$u_h$---this would be a solution to the Schr\"odinger equation with a
complex spectral parameter, here with imaginary part approximately
$-2 \nu h \log(1/h)$.  As we discuss below, the semiclassical
wavefront set for such a solution to the stationary Schr\"odinger
equation propagates along geodesics that are permitted to branch at
cone points. In evaluating the regularity of $u_h$, we show
(Proposition~\ref{proposition:variablepropagation}) that it
\emph{loses} regularity along the forward bicharacteristic flow at a
constant rate proportional to $\nu$.  At each diffraction, by
contrast, $u_h$ generically \emph{gains} regularity by approximately
the factor $h^{(n-1)/2}.$ These gains and losses of regularity must be
in balance along any closed branched geodesic in the wave-front set of
$u_h$.  The smallest $\nu$ will thus be obtained when the diffractive gain in
semiclassical regularity along the branched orbit
is as small as possible per unit length.  We thus show that the optimal scenario is that of
concentration along the closed branching orbit that diffracts as
infrequently as possible: this is the orbit traveling back and forth between
the two maximally separated cone points.  Correspondingly there is a
long-living resonant state concentrated along this orbit that loses energy to infinity via
diffraction as infrequently as possible. It is an instance of the
``weak trapping'' phenomenon referred to above and yields the value
$\nu=(n-1)/(2L_0)$.

It is a natural conjecture that (at least generically), \emph{all} resonances in any log
neighborhood of the real axis lie on quantized log curves $\Im \lambda
\sim -\nu_j \log \smallabs{\Re\lambda}$ for some family of $\nu_j.$
We have been unable to gain sufficient control on error terms to
verify this, however.}

\section{Conic geometry}\label{section:geometry}

We now specify our geometric hypotheses, which are much the same as
those employed in \cite{BaWu:13}: we assume that our manifold has
conic singularities and Euclidean ends, as follows.

Let $X$ be a noncompact manifold with boundary, $K$ a compact subset
of $X$, and let $g$ be a Riemannian metric on $X^\circ$ such that
$X\backslash K$ is isometric to a union of finitely many exteriors of Euclidean balls
$\bigsqcup_j ( \RR_{z_j}^n\backslash \overline{B^n(0,R_0)})$ and such that $g$ has conic
singularities at the boundary of $X$:
$$
g=dr^2+ r^2 h(r,dr,y,dy);
$$
here $g$ is assumed to be nondegenerate over $X^\circ$ and
$h |_{\pa X}$ induces a metric on $\pa X,$ while $r$ is a boundary
defining function. We let $Y_\alpha,$ $\alpha=1,\dots N$ denote the
components of $\pa X;$ we will refer to these components in what
follows as \emph{cone points}, as each boundary component is a single
point when viewed in terms of metric geometry.

For simplicity of notation, we will retain the notation $B^n(0, R)$
(with $R \gg 0$) for
the union of $K$ and the intersection(s) of this large ball with the
Euclidean end(s) $X \backslash K$.

Theorem~1.2 of \cite{Melrose-Wunsch1} implies that we may
choose local coordinates $(r,y)$ in a collar neighborhood of each $Y_\alpha$
such that the metric takes the form
\begin{equation}\label{semiproduct}
g=dr^2+ r^2 h(r,y,dy),
\end{equation}
where $h$ is now a family in $r$ (which is the distance function to the boundary)
of smooth metrics on $Y_\alpha.$  The curves $\{r=r_0\pm t,\ y=y_0\}$
are now unit-speed geodesics hitting the boundary, and indeed are the
\emph{only} such geodesics.

We will say that the conic manifold $X$ is of \emph{product type} if
locally near $\pa X$ the metric can be written in the form
\begin{equation}\label{productmetric}
g=dr^2 + r^2 h(y,dy)
\end{equation}
in some product coordinates in a collar neighborhood of $\pa X,$ where $r$ is a defining function and
where $h$ is a \emph{fixed} (i.e., $r$-independent) metric on $\pa X.$

We say that the concatenation of a geodesic entering the
boundary at $y=y_0\in Y_\alpha$ and another leaving at $y=y_1 \in
Y_\alpha$ \emph{at the same time} is a \emph{geometric} geodesic if
$y_0, y_1$ are connected by a geodesic in $Y_\alpha$ (with respect to
the metric $h\restrictedto_{r=0}$) of length $\pi.$ Such
concatenations of geodesics turn out to be exactly those which are
locally approximable by families of geodesics in $X^\circ$ (see
\cite{Melrose-Wunsch1}).  In the special case of a surface with
conic singularity there are locally just two of these, corresponding to
limits of families of geodesics that brush past the cone point on
either side; more generally, there is a (locally) codimension-two family of such
geodesics through any cone point.

By contrast, we say 
that concatenation of a geodesic entering the
boundary at $y=y_0\in Y_\alpha$ and another leaving at $y=y_1 \in
Y_\alpha$ \emph{at the same time} is a \emph{diffractive} geodesic if
there is no restriction on $y_0,y_1$ besides lying in the same
boundary component $Y_\alpha.$  We say that a diffractive geodesic is \emph{strictly
  diffractive} if it is not geometric.  We say that two points in the
cotangent bundle of $\RR\times X$ are \emph{diffractively related}
(resp.\ strictly diffractively, geometrically) if
they are connected by a diffractive geodesic (resp.\ strictly
diffractive, geometric). 

The principal results of \cite{Melrose-Wunsch1} (see also
\cites{Cheeger-Taylor1,Cheeger-Taylor2}) are that singularities for
solutions of the wave equation propagate along diffractive geodesics,
with the singularities arising at \emph{strictly} diffractive
geodesics being generically weaker than the main singularities.  More
precisely, if $q$ is a point  with
coordinates $(r_0,y_0)$ lying close to a cone point $Y_\alpha,$ the
solution
$$
u\equiv e^{-it \sqrt{\Lap}} \delta_q
$$
is shown to have a \emph{conormal} singularity at the diffracted
wavefront $r=t-r_0$ (for $t>r_0$) lying in $H^{-1/2-\epsilon}$ away
from the geometric continuations of the geodesic from $q$ to $Y_\alpha$
(whereas the main singularity is in $H^{-n/2-\epsilon}$). The symbol
(and also precise order) of this singularity was analyzed in
\cite{FoWu:17}, based on computations in the product case by
Cheeger--Taylor \cites{Cheeger-Taylor1,Cheeger-Taylor2}, yielding the following:
\begin{prop}[\cite{FoWu:17}]\label{proposition:diffractivepropagator}
 Let $p = (r,y)$ and $p' = (r',y')$ be strictly diffractively related
 points near cone point $Y_\alpha.$  Then near
  $(t,p,p')$, the Schwartz
  kernel of the half-wave propagator $e^{-it\sqrt\Lap}$ (acting on
  half-densities) has an oscillatory
  integral representation
  \begin{equation}\label{FWpropagator}
 e^{-it\sqrt{\Lap}} = \int_{\RR_\xi} e^{i(r + r' - t) \cdot \xi} \,
    a_D(t,r,y;r',y';\xi) \, d\xi
  \end{equation}
  whose amplitude $a_D \in S^0$ is
  \begin{equation}
    \label{eq:nonproduct-hw-amp}
    \frac{(rr')^{-\frac{n-1}{2}}}{2\pi i} \, \cutoff(\xi) \cdot
    \bmD_\alpha(y,y') \cdot
    \Theta^{-\frac{1}{2}}(Y_\alpha \to y) \, \Theta^{-\frac{1}{2}}(y' \to
    Y_\alpha) \, \omega_g(r,y) \, \omega_g(r',y') 
  \end{equation}
  modulo elements of $S^{-\frac{1}{2} + 0}$.  Here,
  $\cutoff \in \mathcal{C}^\infty(\RR_\xi)$ is a smooth function satisfying
  $\cutoff \equiv 1$ for $ \xi > 2$ and $\cutoff \equiv 0$ for $ \xi <
  1$.  The factor $\bmD_\alpha(y,y')$ is the Schwartz kernel of
  the operator $e^{-i\pi\sqrt{\Lap_{Y_\alpha}+(n-2)^2/4}},$ while the
  factors $\Theta^{-\frac 12}$ are given by nonvanishing determinants of Jacobi
  fields (cf.\ \cite{FoWu:17} for details).  \\

In the case where $X$
  is of product type near $Y_\alpha,$ the amplitude $a_D$
  admits an asymptotic expansion in powers of
  $\abs{\xi}^{1/2}.$
\end{prop}

\section{Analytic preliminaries}

We begin by making a semiclassical rescaling of our problem. 
Existence of a resonance $\lambda$ implies the existence of a certain
kind of
``outgoing'' solution 
$u$ (an associated \emph{resonant state}) of the equation 
$$
(\Lap-\lambda^2)u=0.
$$
Setting $\Re \lambda =h^{-1}$ and $\Im \lambda = -\nu \log \Re
\lambda=-\nu \log h^{-1}$ gives
\begin{equation}\label{eq:defres}
(h^2\Lap-z_h)u=0
\end{equation}

where
$$
z_h = (1-i\nu\hlh)^2 = 1+O( (\hlh)^2 )-2i \nu \hlh.
$$
Our aim is to understand the resonances of the family $h^2 \Lap-z_h$
in a logarithmic neighbourhood of the real axis. More precisely, we
look for resonances in the set of $z_h$ with
$$\sqrt{z_h} \in \Omega_\ep\equiv \{
  (-\Lambda +\epsilon)\hlh <\Im\sqrt{ z_h}<0,\ \Re \sqrt{z_h} \in [1-\ep,1+\ep] \}$$

Now we recall (cf.\ \cite{Sjostrand-Zworski1} or Chapter 4 of \cite{DyZw:15})
that the resonances of the Laplacian with argument having
magnitude less than a fixed $\theta$ agree with poles of
\emph{complex-scaled} operator $\Lap_\theta$ which coincides with the
original Laplacian on a large compact set but which, near infinity, is
deformed into the complex domain.  Existence of a resonance at $z_h$
is then equivalent to existence of an $L^2$ eigenfunction of the
non-self-adjoint complex scaled problem.

As in \cite{BoFuRaZe:16}*{Section 2}, \cite{Sjostrand-Zworski:Fractal} we
will scale only to an angle $O(\hlh);$ this restriction on the scaling
has the virtue (albeit an inessential one here) that the overall
propagation of singularities near the scaling is still bi-directional:
while propagation into the scaling region loses powers of $h,$ it only
loses a finite number of such powers (see
Proposition~\ref{proposition:variablepropagation} below).  Scaling to
fixed angle, by contrast, would break the propagation of semiclassical
singularities at the boundary of the scaled region.  Thus we fix
$M\gg 0$ and set $\theta$ so that 
\begin{equation}\label{thetadef}\tan
  \theta =M
  \hlh.\end{equation} 
 
We let $D_x= -i \partial_x$ and we consider an operator given by the
Laplace-Beltrami operator $\Lap_g$ on the compact part $K \subset X$
and, on the ends (with Euclidean coordinate $x\in \RR^n$), given by the
expression
$$
\Lap_\theta =\big((I+ i F_\theta''(x))^{-1} D_x\big)\cdot \big((I+
i  F_\theta''(x))^{-1} D_x\big),
$$
where
$$
F_\theta(x) =(\tan \theta)\cdot g(\abs{x})
$$
for a function $g$ chosen so that 
$$
g(t) = 0,\ t\leq R_1, \quad g(t)=\frac 12 t^2,\ t>2R_1,\ g''(t) \geq 0.
$$
We now set
$$
P_\theta=h^2 \Lap_\theta,
$$

By construction, we have the following decomposition of $X$ 
\begin{enumerate}
\item The interior region $B^n(0,R_1)$ in which $P_\theta=h^2\Delta.$
\item The scaling region $R_1\leq R\leq 2R_1$. In this region, we
  have 
$\Id+iF_{\theta}''(x) = \Id+\tan \theta G(x) = \Id+ iMh\log (1/h)\cdot G(x),$
for some $n\times n$ matrix $G$ with smooth entries. 
We then obtain the asymptotic expansion 
\[
P_\theta \sim h^2\Delta \,+\, \sum_{k\geq 1} (\tan \theta)^k Q_k
\]
for some $Q_k \in \Diff^2_h.$
\item The deep scaling region ($R\geq 2R_1$) in which 
\[
P_\theta = (1+i\tan \theta)^{-2}h^2 \Delta.
\]
\end{enumerate}
This decomposition implies that there exists a constant $C,$
independent of $h,$ such that, for any $v$ 
\begin{equation}\label{eq:estscal}
\| (P_\theta-h^2\Delta) v\|_{L^2} \,\leq\, C \hlh \| (h^2\Delta +1) v\|_{L^2} 
\end{equation}
 
Let us assume as above, that, for a sequence of resonances, there
exists $\nu, E \in (0,\infty)$ 
for which 
$$
z_h=E+o(1)-i (2\nu+o(1)) \hlh.
$$
Since existence of resonances is equivalent to existence of eigenvalues of the complex-scaled operator
$P_\theta$, there exists a sequence of solutions to
\begin{equation}\label{eigenfunction}
(P_\theta-z_h) u_h=0,
\end{equation}
that is normalized in $L^2.$

There is a slight ambiguity in the nomenclature since the eigenfunction $u_h$
in the latter equation differs from the resonant state in
(\ref{eq:defres}), although the $z_h$ are the same. We will say that
this $u_h$ is a resonant state. 
 
The semiclassical principal symbol of $P_\theta-z_h$ is then
given by
$$
\sigma_h(h^2\Lap_\theta)-E+o(1)+ i (2\nu+o(1))\hlh .
$$
We easily see as in \cite{Sjostrand-Zworski1}, \cite{DyZw:15} that for
$h$ sufficiently small,
$$
-C_1 \theta \abs{\xi}^2\leq  \Im \sigma_h(h^2\Lap_\theta) \leq 0
$$
and 
$$
 \Im \sigma_h(h^2\Lap_\theta) \leq -C_2 \theta \abs{\xi}^2,\ \abs{x}>2 R_1.
$$
Hence if $M$ is sufficiently large (relative to $\nu$), $P_\theta-z_h$
enjoys a kind of  semiclassical hypoellipticity in $\abs{x}>2R_1$
in the sense that as $h \downarrow 0$ the imaginary part of its symbol is a nonvanishing
multiple of $\hlh$ in this region.  This will have consequences 
for regularity of solutions to $(P_\theta-z_h)u_h=O(h^\infty)$ that we
will derive in the following sections.

\section{Semiclassical wavefront set and propagation of singularities}

Let $(u_h)_{h\geq 0}$ be a bounded sequence in $L^2(X).$ For a
positive sequence $\epsilon_h$, we will use
the notation $u_h=O_{L^2}(\epsilon_h)$ to say that
$\epsilon_h^{-1}u_h$ is a bounded sequence in $L^2(X).$

For $(x,\xi) \in T^*X^\circ$ we define (cf.\ \cite[Chapter 8]{Zw:12})
$$
(x,\xi) \notin \WF_h^s u_h \Longleftrightarrow \text{there exists } A
\in \Psi_h(X) \text{ elliptic at } (x,\xi) \text{ and } A u_h=O_{L^2}(h^s).
$$
Likewise
$$
(x,\xi) \notin \WF_h u_h \Longleftrightarrow \text{there exists } A
\in \Psi_h(X) \text{ elliptic at } (x,\xi) \text{ and } A u_h
= O_{L^2}(h^\infty).
$$
As usual, we have
$$
\WF_h u_h =\overline{\bigcup_s \WF_h^s u_h}.
$$
(Note that, for the moment, we are only dealing with wavefront set
\emph{away} from cone points)

By standard elliptic regularity in the semiclassical calculus, we have
the following result.
\begin{lemma}\label{lemma:ellipticity}
Let $(P_\theta-z_h)u_h=O_{L^2}(h^\infty)$ with $\Re z_h=E+o(1).$

We have
$$
\WF_h u_h\cap T^*X^\circ \subset \{\abs{\xi}_g^2=E\}.
$$
\end{lemma}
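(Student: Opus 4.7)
The plan is to prove Lemma~\ref{lemma:ellipticity} by a standard semiclassical elliptic parametrix argument. Fix $(x_0,\xi_0) \in T^*X^\circ$ with $|\xi_0|_g^2 \neq E$; the goal is to show that $P_\theta - z_h$ is semiclassically elliptic at $(x_0,\xi_0)$, from which $(x_0,\xi_0) \notin \WF_h u_h$ follows by a microlocal inverse construction.

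First I would read off the semiclassical principal symbol of $P_\theta - z_h$ at $(x_0,\xi_0)$ from the three-region decomposition of $P_\theta$ given after \eqref{thetadef}. In the interior region $|x| \leq R_1$, $P_\theta = h^2 \Lap$ has principal symbol $|\xi|_g^2$; in the scaling region, the expansion $P_\theta \sim h^2\Lap + \sum_{k\geq 1}(\tan\theta)^k Q_k$ gives principal symbol $|\xi|_g^2 + O(\tan\theta)\cdot|\xi|^2$; in the deep scaling region, the symbol is $(1+i\tan\theta)^{-2}|\xi|^2$. Since $\tan\theta = M\hlh \to 0$ and $z_h \to E$ as $h \downarrow 0$, in every case
$$
\sigma_h(P_\theta - z_h)(x_0,\xi_0) \;=\; |\xi_0|_g^2 - E + o(1),
$$
which is bounded away from zero for $h$ sufficiently small.

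Next, the standard semiclassical elliptic parametrix construction (cf.\ \cite[Chapter 4]{DyZw:15} or \cite[Chapter 8]{Zw:12}) furnishes operators $Q_h, B_h \in \Psi_h(X)$ with $B_h$ elliptic at $(x_0,\xi_0)$ and
$$
Q_h(P_\theta - z_h) \;=\; B_h + R_h,
$$
where $R_h$ is $O(h^\infty)$ as an operator $L^2 \to L^2$ microlocally near $(x_0,\xi_0)$. Applying this identity to $u_h$ and invoking the hypothesis $(P_\theta - z_h)u_h = O_{L^2}(h^\infty)$ yields $B_h u_h = O_{L^2}(h^\infty)$, so $(x_0,\xi_0) \notin \WF_h u_h$.

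No substantive obstacle is expected. The only mild wrinkle is that $P_\theta$ has a complex, $h$-dependent symbol in the scaling region; but because only nonvanishing (not positivity or any sign condition) of the symbol is required, and the deviation of the real part from $|\xi|_g^2$ is of order $\tan\theta = O(\hlh)$, the classical elliptic argument applies essentially verbatim. The restriction to $T^*X^\circ$ in the statement means no analysis near the cone points is needed.
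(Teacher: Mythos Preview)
Your proposal is correct and is precisely the ``standard elliptic regularity in the semiclassical calculus'' that the paper invokes without further detail. The paper gives no explicit proof of this lemma beyond that phrase, so your write-up in fact supplies the argument the paper leaves implicit, via the same mechanism (nonvanishing of $\sigma_h(P_\theta-z_h)$ off the energy shell and a microlocal parametrix).
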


This lemma implies that the wave-front set of $u_h$ does not
intersect the $0-$section in $T^*X_0.$ Thus, we may test $u_h$ against
standard (non-semiclassical) pseudodifferential or Fourier Integral
operators in order to understand its semiclassical wave-front. To
understand 
the propagation of singularities, it
thus suffices to study $U(t)u_h,$ where $U(t)$ denotes the half-wave
propagator $U(t) =\exp(-it\sqrt{\Lap}).$

The rest of this section is devoted to the proof of the following propagation of
singularities result. What is novel here is the variable semiclassical order (in addition to the presence
of complex scaling).  Note also that this proposition only governs propagation in
$X^\circ,$ away from cone points.  

\begin{prop}\label{proposition:variablepropagation}
Let
$$(P_\theta-z_h)u_h=O_{L^2}(h^\infty)$$ 
with
$$
z_h=E+o(1)-i(2 \nu+o(1))\hlh.
$$
\begin{enumerate}\item\label{prop:fwd}
For $t>0$ assume that
$\{\exp_{t' \hamvf}(q),\ t' \in [0, t]\}\subset T^* (X^\circ \cap B(0,R_1)).$ 
Then $$
q \notin \WF_h^s u_h\Longrightarrow \exp_{t \hamvf} (q) \notin \WF_h^{s'} u_h$$
for all $s'<s- 2t\nu.$
\item \label{prop:back}
For $t>0$, assume that
$\{\exp_{t' \hamvf}(q),\ t' \in [0, t]\}\subset T^*(X^\circ \cap B(0,R_1)).$ 
Then
$$
\exp_{t \hamvf} (q) \notin \WF_h^{s'} u_h\Longrightarrow q \notin \WF_h^s u_h
$$
for all $s<s'+2t\nu,$
\item \label{prop:fwdback}
There exists some $M_0>0$ such that for \emph{any}
  $q \in T^*X^\circ$ and $t \in \RR$ (not necessarily positive)
$$
q \notin \WF_h^{s} u_h
\Longrightarrow 
\exp_{t \hamvf} (q) \notin \WF_h^{s'} u_h,
$$
for all $s'<s-M_0\smallabs{t}$
provided $\{\exp_{t' \hamvf} (q): t'\text{ between } 0 \text{ and } t\} \subset T^*X^\circ.$
In particular this implies
\begin{equation}\label{simplepropagation}
q \in \WF_h u_h \Longleftrightarrow
\exp_{t \hamvf} (q) \in \WF_h u_h,
\end{equation}
provided the flow does not reach a cone point.
\item\label{prop:inscaling}
There exists $C>0$ such that for $t>0$ and
$\{\exp_{t' \hamvf}(q),\ t' \in
[0, t]\}\subset T^*(X\backslash B(0, 2R_1)),$ 
$$
q\notin  \WF_h^{s} u_h \Longrightarrow  \exp_{t \hamvf} (q) \notin \WF_h^{s'} u_h
$$
for all $s'<s+Ct.$
\end{enumerate}
\end{prop}

\begin{rema}
  The main content of the proposition is that, in the interior region
  and under forward propagation, semiclassical regularity drops at a rate $2\nu t$ on
  the $t$-parametrized flow along $\hamvf,$ the Hamilton vector field
  of the symbol of $P.$ Note, though, that the vector field $\hamvf/2\sqrt{E}$
  induces unit speed geodesic flow, hence the rate of regularity loss along
  unit-speed geodesics is $\nu t/\sqrt{E},$ where $E$ is the real part
  of the spectral parameter and $-2i\nu\hlh$ the imaginary part.

The situation is more complicated near the boundary of the scaling
region, where there are gains or losses in regularity owing to the
scaling which compete with the $2\nu t$ loss rate, but these gains and losses are also of finite order.
The last
part illustrates that deep within the scaling region, forward propagation
\emph{gains} regularity; this will be our substitute for elliptic regularity
in the scaling region, since we do not have full semiclassical
elliptic regularity with the angle $\hlh$ scaling employed here:
semiclassical singularities instead propagate but decay.
(Cf.\ \cite{BoFuRaZe:16}*{Lemma 8.4} for related results.) 
\end{rema}

\begin{rema}
This proposition can be proved using commutator arguments involving
operators of variable
semiclassical order.  We refer the reader to \cite[Section 2.3]{Ga:14}
for a thorough treatment of the symbols of these operators.  The main
difference with ordinary commutator arguments is the presence of $\log
(1/h)$ losses in the computation of the Poisson bracket (see also 
\cite[Appendix A]{BaVaWu:15} for analogous discussion in the
homogeneous setting). We have chosen a different approach using the
half-wave propagator since we will need to understand $U(t)u_h$ anyway
to go through the conical points. 
\end{rema}

In the next subsection, we prove parts \eqref{prop:fwd} and
\eqref{prop:back} of Proposition \ref{proposition:variablepropagation}
and in the following one we will turn to the parts \eqref{prop:fwdback} and \eqref{prop:inscaling}.   

\subsection{Propagation in the interior region}
Consider a resonant state $u_h,$ i.e., a solution to 
$$
(P_\theta-z_h) u_h=0.
$$
Then of course locally in $B(0, R_1)$ we have simply
$$
(h^2\Delta-z_h) u_h=0
$$
so that, by finite speed of propagation,
\[
\frac{\sin( t\sqrt{\Delta})}{\sqrt{\Lap}} u_h \,=\, \frac{h\sin(t\frac{\sqrt{z_h}}{h})}{\sqrt{z_h}} u_h, 
\]
in $B(0,R_1-A)$ for any $|t|<A.$ 

We need a little more work to understand $U(t)u_h.$

\begin{lemma}\label{lem:Utu}
Let $u_h$ be a resonant state associated with the resonance $z_h$ with
$\sqrt{z_h} \in \Omega_\ep.$
For any $|t|<A$ and any open set $V$ such that $\overline{V}\subset
B(0,R_1-A)\cap X^\circ.$

\begin{equation}\label{reproducing}
U(t) u_h = e^{-it \sqrt{z_h}/h} u_h+O(h^\infty)
\end{equation}
in $L^2(V).$
\end{lemma}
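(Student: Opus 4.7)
My plan is to recast the identity as a Duhamel formula and control the remainder on $V$ via semiclassical pseudolocality combined with propagation of singularities. The main difficulty compared to the $\sin(t\sqrt{\Delta})/\sqrt{\Delta}$ identity already established is that $U(t)=\cos(t\sqrt{\Delta})-i\sqrt{\Delta}\cdot[\sin(t\sqrt{\Delta})/\sqrt{\Delta}]$ contains a non-local factor of $\sqrt{\Delta}$, which destroys the finite-speed-of-propagation argument used in the preceding lines of the excerpt.

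First I would choose $\chi\in C_c^\infty(B(0,R_1))$ with $\chi\equiv 1$ on a neighborhood of $\overline V$ such that $\dist(\overline V,\operatorname{supp}(d\chi))>A$ and $\operatorname{supp}(\chi)$ avoids the cone points. Elliptic regularity makes $u_h$ smooth on $B(0,R_1)$, so $\chi u_h\in L^2(X)$, and I interpret $U(t)u_h|_V$ as $U(t)(\chi u_h)|_V$. Setting
$$w(t):=U(t)(\chi u_h)-e^{-it\sqrt{z_h}/h}\chi u_h,$$
a direct computation gives $w(0)=0$ and $(\partial_t+i\sqrt{\Delta})w=-ie^{-it\sqrt{z_h}/h}(\sqrt{\Delta}-\sqrt{z_h}/h)\chi u_h$, so by Duhamel
$$w(t)=-i\int_0^t e^{-is\sqrt{z_h}/h}\,U(t-s)\bigl[(\sqrt{\Delta}-\sqrt{z_h}/h)\chi u_h\bigr]\,ds.$$

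Next I would make the Duhamel source explicit. Via the factorization $h^2\Delta-z_h=(\sqrt{h^2\Delta}-\sqrt{z_h})(\sqrt{h^2\Delta}+\sqrt{z_h})$ (valid since $\sqrt{h^2\Delta}$ is the positive self-adjoint square root and $\Re\sqrt{z_h}>0$) together with $(h^2\Delta-z_h)u_h=0$ on $\operatorname{supp}(\chi)$, one finds
$$(\sqrt{\Delta}-\sqrt{z_h}/h)\chi u_h=h^{-1}\bigl(\sqrt{h^2\Delta}+\sqrt{z_h}\bigr)^{-1}[h^2\Delta,\chi]u_h.$$
The commutator $[h^2\Delta,\chi]u_h$ is uniformly $L^2$-bounded and supported on $\operatorname{supp}(d\chi)$, while the resolvent $(\sqrt{h^2\Delta}+\sqrt{z_h})^{-1}$ is a semiclassical pseudodifferential operator of order $-1$, pseudolocal with Schwartz kernel that is $O(h^\infty)$ off the diagonal. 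Hence the semiclassical wavefront set of the Duhamel source is contained in $T^*\operatorname{supp}(d\chi)$.

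For the conclusion I would invoke semiclassical propagation of singularities: for $|t-s|\leq|t|<A$, the propagator $U(t-s)$ moves the wavefront set by at most $|t-s|$ along geodesics (including diffractive ones, whose total length is no shorter than the direct distance), which by our choice of $\chi$ keeps it at positive distance from $\overline V$. Therefore $U(t-s)[(\sqrt{\Delta}-\sqrt{z_h}/h)\chi u_h]|_V=O_{L^2}(h^\infty)$; integration in $s$ gives $w(t)|_V=O_{L^2}(h^\infty)$, and using $\chi u_h=u_h$ on $V$ yields the lemma. The main obstacle is precisely the non-locality of $\sqrt{\Delta}$: globally the Duhamel source is only $O(1)$ in $L^2$, but the factorization above microlocalizes it onto $\operatorname{supp}(d\chi)$, at which point semiclassical propagation for the allowed time range $|t|<A$ keeps it away from $V$.
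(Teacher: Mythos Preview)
Your Duhamel approach is in principle workable but takes a considerably more circuitous route than the paper and, as written, has gaps.

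\textbf{What the paper does.} The paper never introduces a cutoff.  It first uses the finite-speed identity for $v_h=\sin(t\sqrt\Delta)/\sqrt\Delta\,u_h$ already recorded above the lemma, then writes $U(t)u_h=\partial_t v_h-i\sqrt\Delta\,v_h$.  The only nonlocal piece is $\sqrt\Delta\,v_h$, and this is handled by first proving $(h\sqrt\Delta-\sqrt{z_h})u_h=O(h^\infty)$ in $L^2(V)$ directly from the factorization $(h\sqrt\Delta+\sqrt{z_h})(h\sqrt\Delta-\sqrt{z_h})u_h=0$ together with the fact (Appendix~\ref{appendix:micro}) that $\sqrt\Delta$ is a genuine pseudodifferential operator locally in $X^\circ$, so that $h\sqrt\Delta+\sqrt{z_h}$ is \emph{semiclassically elliptic} in $V$ and can be inverted microlocally.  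No global inverse is needed; the local parametrix suffices.

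\textbf{Gaps in your argument.}
\begin{enumerate}
\item Your cutoff may not exist.  You require $\supp\chi$ to avoid the cone points \emph{and} $\dist(\overline V,\supp d\chi)>A$.  But the lemma only assumes $\overline V\subset B(0,R_1-A)\cap X^\circ$; a cone point can sit at distance $<A$ from $V$, which makes the two requirements incompatible.  (The easy fix is to demand only that $\supp d\chi$ avoid the cone points, letting $\chi\equiv1$ near them.)
\item You write ``I interpret $U(t)u_h\lvert_V$ as $U(t)(\chi u_h)\lvert_V$,'' but since $u_h\in L^2(X)$ the left side already has an unambiguous meaning, and you must \emph{prove} $U(t)((1-\chi)u_h)\lvert_V=O(h^\infty)$.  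This is the same obstacle you identify for the Duhamel source and is not free.
\item Most seriously, you assert that the \emph{global} operator $(h\sqrt\Delta+\sqrt{z_h})^{-1}$ is a semiclassical pseudodifferential operator with $O(h^\infty)$ off-diagonal kernel.  On a smooth closed manifold this follows from the functional calculus, but here $X$ has conic singularities and Euclidean ends, and this statement is not standard.  The paper invests an entire appendix (Appendix~\ref{appendix:micro}, via the heat kernel) to establish the much weaker fact that $\sqrt\Delta$ is a classical $\Psi$DO locally on $X^\circ$; your claim about the semiclassical resolvent would require comparable work and is not a black box you can invoke.
\end{enumerate}

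\textbf{Comparison.} The paper's proof sidesteps all three issues: no cutoff is used, so (1) and (2) do not arise; and instead of a global resolvent it uses only the \emph{local} elliptic parametrix for $h\sqrt\Delta+\sqrt{z_h}$ in $V$, which is immediate once one knows $\sqrt\Delta\in\Psi^1$ there.  Your route can be salvaged, but it effectively relocates the hard analysis from Appendix~\ref{appendix:micro} into an unproved assertion about $(h\sqrt\Delta+\sqrt{z_h})^{-1}$.
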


\begin{proof}
Since $(P_\theta-z_h)u_h=0,$ we have $(h\sqrt{\Lap}+\sqrt{z_h}) (h\sqrt{\Lap}-\sqrt{z_h})
u_h=0$ in $V.$ Using the results of Appendix \ref{appendix:micro}, we know
that, in $V,$ $\sqrt{\Lap}$ is a
pseudodifferential operator with real symbol so that, by ellipticity of the first factor, we obtain
$(h\sqrt{\Lap}-\sqrt{z_h})u_h=O(h^\infty)$  in $L^2(V).$ 
We now set 
\[
v_h(t,\cdot)\,=\,\frac{\sin( t\sqrt{\Delta})}{\sqrt{\Lap}} u_h.
\]
We have 
\[
\begin{split}
U(t) u_h & = \partial_t v_h(t,\cdot)\,-\, i\sqrt{\Lap} v_h \\
&= \cos( t\frac{\sqrt{z_h}}{h})u_h\,-\,i
\frac{h\sin(t\frac{\sqrt{z_h}}{h})}{\sqrt{z_h}}\sqrt{\Lap} u_h.
\end{split}
\]
The claim follows since $h\sqrt{\Lap} u_h = \sqrt{z_h}u_h +O(h^\infty).$
\end{proof}

\begin{proof}[Proof of \eqref{prop:fwd} and
\eqref{prop:back} of Proposition \ref{proposition:variablepropagation}]
Let $(x_0,\xi_0)$ in $T^*X^\circ\cap B(0,R_1)$ and $t$ such that the
geodesic of length $t$ emanating from $(x_0,\xi_0)$ stays in
$T^*X^\circ\cap B(0,R_1).$ We write $\phi_t$ for  $\exp_{t \hamvf}.$

If $(x_0,\xi_0) \notin \WF_h^s(u_h)$ then we can find microlocal
cutoffs $\Pi_0$ near $(x_0,\xi_0)$ and $\Pi$ near $\phi_t(x_0,\xi_0)$ such
that $\Pi_0 u_h = O_{L^2}( h^{s})$ and $\Pi U(t) (I-\Pi_0)$ is
smoothing. It follows that 

\[
\begin{split}
\Pi u_h &=\, e^{it \sqrt{z_h}/h}\Pi U(t) u_h \,+\, O_{L^2}(h^\infty)\\
& =\,  e^{it \sqrt{z_h}/h}\Pi U(t) \Pi_0 u_h \,+\,O_{L^2}(h^\infty)
\end{split}
\]

By unitary of $U(t),$
\[
 \| e^{it \sqrt{z_h}/h}\Pi U(t) \Pi_0 u_h \| \,\leq
 \,h^{-2(\nu t+0)}\|\Pi_0 u_h\|,
\] 
hence
\begin{equation}
  \label{eq:2}
  \Pi u_h =O_{L^2}(h^{s-2\nu t-0}).\qed
\end{equation}
\noqed
 \end{proof}

\subsection{Propagation in the scaling region}
We now prove a propagation result which is valid everywhere 
in $T^* X^\circ,$ including the scaling region.

Let $(x_0,\xi_0)\notin \WF_h^s(u_h).$  Assume that {$\phi_t(x_0,\xi_0)$} lies over
$X^\circ$ for all $t \in [0, T],$ for some $T>0.$ We will show that {$\phi_t(x_0,\xi_0) \notin
\WF_h^{s-M_0t-\epsilon} u_h$} for all $t\in [0,T]$ and $\epsilon>0;$ the case of negative values of
$t$ will follow by the exact same argument, which is not sensitive to
choices of sign. The constant $M_0$ will be defined during the proof.

We prove by (descending) induction that for any $N\in \NN$ the
following holds {true:} for any $\eps>0,$ we have 
{$\phi_t(x_0,\xi_0) \notin \WF_h^{s-N-M_0t-\epsilon}$} for $t \in [0,T].$ 

This assertion certainly holds true if $N$ is large enough (since $u_h \in L^2$).
We now assume that it holds true for some $N$, set $s'=s-N,$ and
choose $\epsilon>0.$ 

There exists $\Pi_0 \in \Psi_h(X^\circ),$ elliptic at $(x_0, \xi_0),$
with $\Pi_0 u=O_{L^2}(h^s).$ Let
$$\Pi(t) = U(t) \Pi_0 U(-t);$$ by the results of
Appendix~\ref{appendix:micro} this is in fact a pseudodifferential
operator elliptic at $\phi_t(x_0, \xi_0).$  By our
inductive hypothesis, we may shrink $\Pi_0$ if necessary (but still
include a \emph{fixed} open neighborhood of $(x_0, \xi_0)$ in its elliptic set) and then
additionally find $\Pit(t)$ elliptic on $\WF'\Pi(t)$ with
$\Pit(t) u=O_{L^2}(h^{s'-M_0t-\frac{\epsilon}{2}}).$

Microlocally in $T^*X^\circ$, the operator $h\sqrt{\Delta}+\sqrt{z_h}$ is an elliptic pseudodifferential
operator so that  
\[
\sqrt{\Delta} u_h -\frac{\sqrt{z_h}}{h} u_h \,=\,Q_h u_h, 
\]
where we have set 
\[
Q_h := h^{-1} (h\sqrt{\Delta}+\sqrt{z_h})^{-1} \left(
  h^2\Delta-P_\theta\right ).
\]
Thus $Q_h \in \log(1/h) \Psi_h(X^\circ)$ in the scaling region (and
vanishes where the scaling vanishes).

Let $f(t) =\norm{\Pi(t) u_h}.$  Then we compute:
\begin{align*}
2 f f'=\frac{d}{dt} f^2 &= 2 \Im \ang{[\sqrt{\Lap}, \Pi(t)] u_h,\Pi(t)u_h}\\
&= -2 \Im \ang{\Pi(t)\sqrt{\Lap} u_h, \Pi(t) u_h}\\
&= -2 \Im \ang{\Pi(t)(\sqrt{z_h}/h +Q_h) u_h, \Pi(t) u_h}\\
&= -2 \Im\sqrt{z_h}/h \norm{\Pi(t) u_h}^2 -2 \Im \ang{\Pi(t) Q_h u_h, \Pi(t)
  u_h}\\
&= -2 \Im\sqrt{z_h}/h \norm{\Pi(t) u_h}^2 -2 \Im \ang{ Q_h \Pi(t) u_h, \Pi(t)
  u_h}\\
&~~-2 \Im \ang{ [\Pi(t), Q_h] u_h, \Pi(t) u_h}.
\end{align*}
There exists a constant $M_0$ (which is independent of $s'$) such that 
$$M_0>\frac{(-\Im \sqrt{z_h}/h)+\norm{Q_h}}{\log(1/h)},$$ and another
constant $C$ such that $  \| [\Pi(t), Q_h] u_h\| \,\leq\, Ch\log(1/h) 
\smallnorm{\widetilde{\Pi}(t) u_h}.$
Hence we obtain 
\begin{align*}
2 f f '&\leq 2M_0\log(1/h) \norm{\Pi(t) u_h}^2 + C h
  \log(1/h)\smallnorm{\widetilde{\Pi}(t) u_h}\norm{\Pi(t) u_h}\\
&\leq 2M_0\log(1/h) f^2 + C h^{1+s'-M_0t-\frac{\epsilon}{2}}
  \log(1/h) f.
\end{align*}

An application of the Gronwall inequality now yields
$$
f(t) \leq f(0) h^{-M_0t} + C h^{1+s'-M_0t-\epsilon }h^{\frac{\epsilon}{2}}\log (1/h),
$$
hence our assumption on $\Pi_0$ yields
$$
f(t) \leq C h^{1+s'-M_0t-\epsilon},
$$
which, since $\Pi(t)$ is elliptic on $\phi_t(x_0, \xi_0),$
completes the proof of the induction step. 
Part (3) of the proposition follows by setting $N=0$.

Finally, in the deep scaling region, we follow the same argument as in the
un-scaled region, using that 
\[
P_\theta u_h \,=\ z_h u_h \implies \sqrt{\Delta} u_h \,=\,
(1+ih\tan \theta)\frac{\sqrt{z_h}}{h} u_h
\]
so that we obtain, in place of \eqref{eq:2}, the \emph{gain} in regularity
\[
\| \Pi u_h \| =O(h^{s+Ct}).
\] 

\section{Microlocal concentration on the outgoing set}
We now deduce from the preceding section the fact that the wavefront set 
of a resonant state only lives on the outgoing set, which we define
below.  Unless otherwise specified below, we will take the asymptotics
\eqref{z} for $z_h$ as a standing assumption from now on, so that we
may apply the propagation theorems obtained above.

\begin{coro}\label{corollary:nontrapped}
Let
$$(P_\theta-z_h)u_h\,=\,O(h^\infty)$$ 
with
\begin{equation}\label{z}
z_h=E+o(1)-i(2 \nu+o(1))\hlh.
\end{equation}
Let $q \in T^*X^\circ$ and assume that $\exp_{-t \hamvf}(q)\in T^*
X^\circ$ for all $t>0.$  Then $q \notin \WF_h u_h.$
\end{coro}
\begin{proof}
The assumption on the flowout of $q$ means, given our
standing hypothesis that geodesics in $X^\circ$ are non-trapped, that
the backward flowout of $q$ eventually escapes into the deep scaling region
$X\backslash B(0, 2R_1).$  Thus there exists $T_0$ such that for
$t\geq T_0,$ $\pi(\exp_{-t \hamvf} q) \notin B(0, 2 R_1).$ Since
$u_h \in L^2,$ part {\ref{prop:inscaling}} of
Proposition~\ref{proposition:variablepropagation}, applied to a
neighborhood of the backward flowout of $\exp_{-T_0 \hamvf} (q)$ for
arbitrarily long times, gives
$\exp_{-T_0\hamvf} (q) \notin \WF_h u_h.$  Part
\ref{prop:fwdback} then yields $q \notin \WF_h u_h.$
\end{proof}

This proposition tells us that a resonant state $u_h$ can only have
wave-front on rays that emanate from the conical points. 
This leads to the following definition.

\begin{defi} Let $\Gamma_{\pm}$  denote
the flowout/flowin from/to the union of cone points, i.e.\
$$\Gamma_{\pm}=\big\{q \in T^*X^\circ\colon \exp_{t\hamvf}(q) \to Y
\text{ as } t \to t_0^\pm \text{ for some } t_0 \lessgtr 0\big\}
$$
Let $\Gamma
\equiv \Gamma_+\cap \Gamma_-$ denote the geodesics propagating among
the cone points.\end{defi}

Locally near $Y,$ in coordinates $(r,y)$ from
\eqref{semiproduct} with canonical dual coordinates $\xi,\eta,$
we have $\Gamma_\pm=\{\xi\gtrless 0,\ \eta=0\}.$ 
The preceding corollary thus says that 
\[
\WF_h(u_h)\subset \Gamma_+.
\]

The set $\Gamma$ is the trapped set that corresponds to our setting
(cf.\ Chapter 6 in \cite{DyZw:15}). It consists in the geodesic rays
that connect two (not necessarily distinct) conical points.  We now
proceed to show that a non-trivial resonant state must have
\emph{some} wavefront set on $\Gamma.$ This will result from the
composition with the half-wave propagator near a conical point and of
the known structure of this operator.

\section{Composition with the wave propagator}\label{section:composition}
Recall that $U(t)$ denotes the half-wave propagator :
\[
U(t) =e^{-it\sqrt{\Lap}}. 
\]
The propagation results of \cite{Melrose-Wunsch1},
\cites{Cheeger-Taylor1, Cheeger-Taylor2} translate immediately into
statements on propagation of semiclassical
wavefront set:
\begin{prop}\label{proposition:propofsings}
Let $A_h,\ B_h \in \Psi_h(X^\circ)$ be compactly supported semiclassical pseudodifferential operators with
microsupports in a neighborhood of a cone point $Y_\alpha$ that are strictly diffractively related (i.e.\ no
geometric geodesics through $Y_\alpha$ connect a point in $\WF_h'A_h$ and $\WF_h' B_h$)
and with $$(\WF_h' A_h \cup \WF_h' B_h) \cap (0-\text{section})=\emptyset.$$
Then for any $f_h$ with $(P_h-z_h)f_h=O(h^\infty)$ we have
$$
\WF_h( A_h U(t) B_h f_h) \subset D \circ \WF_h f_h
$$
where $D$ denotes the canonical relation in canonical coordinates $(r,\xi,\theta,\eta)$
$$
\{(r,\xi,y,\eta,r',\xi',y',\eta'): r+r'=t, \eta=\eta'=0, \xi=-\xi'\}.
$$
Moreover a quantitative version of this result holds: there exists
$N$ such that
$$
\WF_h^s( A_h U(t) B_h f_h) \subset D \circ \WF_h^{s+N} f_h.
$$
\end{prop}
As a special case, this result tells us that if there is no
wavefront set at all on geodesics arriving at $Y_\alpha$ then there is
no wavefront set on geodesics leaving it.
\begin{proof}
The main result of \cite{Melrose-Wunsch1} is that near diffractively
related points, for fixed $t,$ the Schwartz kernel of $U(t)$ is a conormal
distribution with respect to $r+r'=t.$  Then (8.4.8) of \cite{Zw:12}
shows that in fact we locally have
$$
\WF_h(U(t))\backslash\{0-\text{section}\} = N^*\{r+r'=t\}.
$$
The mapping property on $\WF_h$ then follows from the usual results on
mapping properties of FIOs.

The
quantitative version follows from the closed graph theorem.  (We could
of course get an explicit $N$ but it is immaterial for our purposes.)
\end{proof}

\begin{rema}
The precise form of the principal symbol of the conormal distribution
will not concern us so much as the mere fact of conormality (and the
order of the distribution).
\end{rema}

As a consequence of this lemma and of
Proposition \ref{proposition:propofsings} together with our ``free''
propagation results above
(Proposition \ref{proposition:variablepropagation}), we may now draw the
desired conclusion about the microsupport of a resonant state.

\begin{coro}\label{corollary:propagation}
Suppose $u_h \in L^2(X)$ and
$$
(P_\theta-z_h)u_h=0.
$$
Then $\WF_h u_h\subset \Gamma_+.$  If $\WF_h u_h \cap
\Gamma=\emptyset$ then $u_h=O(h^\infty).$
\end{coro}
\begin{proof}
The containment in $\Gamma_+$ is simply
Corollary~\ref{corollary:nontrapped} above.

Now suppose that there is no wavefront set along $\Gamma=\Gamma_+\cap \Gamma_-.$
Consider one cone point $Y_\alpha$. It is possible to choose, for small $\epsilon$, $B_h$ that microlocalizes near the sphere of radius
$\epsilon$ centered at $Y_\alpha$ in the incoming directions and $A_h$
that microlocalizes near the same sphere but in the outgoing
directions, so that, setting $t=2\epsilon$,
\[
\WF_h A_h U(t) u_h = \WF A_h U(t) B_h u_h.
\]
Applying Proposition \ref{proposition:propofsings}, we find $\WF_h
A_hU(t)B_h u_h=\emptyset.$ The latter equality and equation
\eqref{reproducing} then implies that $\WF_h A_h u_h=\emptyset.$
Since this argument works for any $\alpha,$ we see from our
propagation results above that
$u_h=O(h^\infty)$ globally.
\end{proof}

It is thus sufficient to understand $u_h$ near the rays in $\Gamma$ in
order to understand its global behavior.  In the following section, we
encode the behaviour of $u_h$ near such a ray by restricting $u_h$ to
a transversal cross-section. This is reminiscent of the construction
of the quantum monodromy matrix of Nonnenmacher--Sj\"ostrand--Zworski
(see \cites{NSZ:11,Nonn:12}).

\section{Restriction and extension}\label{section:restriction}




Let $S \subset X^\circ$ be an open oriented hypersurface and let
$(x,y)$ denote normal coordinates near $S,$ i.e., $x$ denotes the
signed distance from the nearest point on $S,$ which then has
coordinate $y \in S.$ We let $\xi,\eta$ denote canonical dual
variables to $(x,y)$ in $T^*X.$

In the following proposition, we construct an extension operator
$\mathcal{E}$ that builds a microlocal solution to
$(P_h-z_h)=O(h^\infty)$  given data on $S$ (Cauchy data).

\begin{prop}\label{proposition:extension}

There exists an amplitude $a$ and a phase 
$$\phi=( (y-y')\cdot \eta \,+\, 
  x\sqrt{1-\abs{\eta}_{g(0,y)}^2}+O(x^2))
$$
having phase variable $\eta \in \RR^{n-1}$ such that the operator
$\mathcal{E}$ with kernel defined by 
$$
\ext (x,y,y') \equiv \big(\frac{h}{\sqrt{z_h}}\big)^{-(n-1)}\int a(x,y,y',\eta;h ) e^{i \phi\sqrt{z_h} /h} \, d\eta
$$
solves 
$$
\begin{aligned} (P_h-z_h) \ext f_h &=O(h^\infty),\\
 \ext f_h \restrictedto_{x=0} &= f_h+O(h^\infty),
\end{aligned}
$$
for any $f_h$ such that $\WF(f_h) \subset \{ \smallabs{\eta}< \frac{1}{2}\}.$
The amplitude $a$ enjoys an asymptotic expansion in nonnegative powers of
$h/\sqrt{z_h},$ with
coefficients that are smooth functions of $y',p.$
\end{prop}

\begin{proof}
We employ the Ansatz
$$
\bigg(\frac{\sqrt{z_h}}{h}\bigg)^{n-1}\int a(x,y,y',\eta;h/\sqrt{z_h}) e^{i \phi\sqrt{z_h}/h} f(y')\, d\eta \, dy',
$$
where $a$ is assumed to have an asymptotic expansion
$$
a\sim \sum_{j=0}^\infty a_j (x,y,y', \eta) \bigg( \frac{h}{\sqrt{z_h}} \bigg)^j.
$$
We find that the Cauchy data is reproduced (modulo $O(h^\infty)$) so
long as \begin{equation}\label{cauchydata} a\restrictedto_{x=0} \equiv 1\end{equation} and
$\phi\restrictedto_{x=0}=(y-y')\cdot \eta.$  On the other hand, applying
$P_h-z_h$ to this expression yields, first, the eikonal equation
$$
z_h\big( (\pa_x \phi)^2 + \abs{\nabla_y \phi}^2_g\big)-z_h=0,
$$
which as usual can locally be solved in the form
$$
\phi=( (y-y')\cdot \eta +
  x\sqrt{1-\abs{\eta}_{x,y}^2}+O(x^2)),
$$
by parametrizing the Lagrangian given by flowout along the Hamilton flow in
$(x,\xi,y,\eta)$ of the set
$$\{(x=0,\ \xi=\sqrt{1-\smallabs{\eta}^2_{g(0,y)}},\ y=y',\ \eta=\eta')\}.$$

Next, the leading transport equation reads
$$
2i h \sqrt{z_h} \nabla \phi \cdot \nabla a_0+ih\sqrt{z_h} \Lap (\phi) a_0=0.
$$
This can be solved by integrating from $x=0$ to give a smooth solution
with Cauchy data \eqref{cauchydata}.

The next transport equation picks out the term $(h/\sqrt{z_h}) a_1$ and now reads
$$
2i h\big(\frac{h}{\sqrt{z_h}}\big)  \sqrt{z_h} \nabla \phi \cdot \nabla a_1+ih\sqrt{z_h}\big(\frac{h}{\sqrt{z_h}}\big) \Lap (\phi)
a_1-h^2 \Lap(a_0)=0;
$$
this likewise has a smooth solution $a_1.$  Subsequent transport equations take the same form.

We may now Borel sum the results of solving the transport equations to
find that the operator $\ext$ with kernel
$$
\ext (x,y,y') \equiv \big(\frac{h}{\sqrt{z_h}}\big)^{-(n-1)}\int a(x,y,y',\eta;h ) e^{i \phi\sqrt{z_h} /h} \, d\eta
$$
solves $$\begin{aligned} (P_h-z_h) \ext f_h &=O(h^\infty),\\
 \ext f_h \restrictedto_{x=0} &= f_h+O(h^\infty).\qed
\end{aligned}
$$  
\noqed
\end{proof}

This extension operator gives us a way to parametrize (microlocally)
any solution to $(P_h-z_h) u_h =O(h^\infty).$

\begin{prop}\label{prop:u=ERu}
Let $u_h$ be a solution to $(P_h-z_h)u_h=O(h^\infty)$ such that $\WF_h
u_h \subset \{\xi>0 \}.$  Set $f_h=
u_h\restrictedto_{x=0}.$ Then microlocally near $(p_0, \dot{\gamma}_{ij}^k),$
\[
u_h-\ext f_h=O(h^\infty).
\] 
\end{prop}

\begin{proof}
The proof relies on the
microlocal energy estimates of \cite[Section 3.2]{Ch:11} (based, in
turn on \cite[Sections 23.1-23.1]{Hormander3} in the homogeneous
setting). According to these estimates, for any solution to $(P_h-z_h)
w_h =O(h^\infty)$ with $\WF' w_h \subset \{\xi>0\}$ we have the bound 
\begin{equation}\label{Cauchyestimate}
\norm{w_h \restrictedto_{x=x_1}} \lesssim h^{-C \abs{x_1}} \norm{w_h\restrictedto_{x=0}};
\end{equation}
the only real change needed from the treatment in
\cite{Ch:11} stems from the fact that operators $A_\pm$ used there are no longer
self-adjoint, with $A_\pm^*-A_\pm =O(\hlh),$ leading to the growth in
norms in the equation above: the LHS of equation (3.10) of
\cite{Ch:11} now has a factor of $h^{C y}$ inside the supremum
arising from the non-self-adjointness.

Equation \eqref{Cauchyestimate} suffices to 
show that our solution
$$
w_h=u_h-\ext f_h
$$ to 
$$\begin{aligned} (P_h-z_h) w_h &=O(h^\infty),\\
w_h \restrictedto_{x=0} &= O(h^\infty)
\end{aligned}
$$ 
must itself be $O(h^\infty),$ as desired.
\end{proof}


We can now use standard FIO methods first to change the phase function  
to the Riemannian distance along the geodesic $\gamma$ and then to extend $\ext$ in a microlocal neighbourdhood of
$\gamma$ even past conjugate points.

Changing the phase function means that, for $(x,y)$ strictly away from $S$ but still in a small neighborhood,
we may also write $u_h$ in the form

\begin{multline}\label{extension}
u_h(p) = \int \bigg( \frac{h}{\sqrt{z_h}}\bigg)^{-(n-1)/2}\tilde{a}
(p,y'; h/\sqrt{z_h}) e^{ i\sqrt{z_h}\dist(p,y')/h}f_h(y';h)\, dy'
\\ \,+O(h^\infty)
\end{multline}
where $\dist$ denotes the Riemannian distance.  Again the amplitude
has an expansion in integer powers of $h/\sqrt{z_h}.$

To see that this is possible, it suffices to observe that we
  may do a stationary phase expansion in the $y',\eta$ integrals
  (Proposition~\ref{proposition:statphase} below),
reducing
  the number of phase variables
  from $(n-1)$ to zero.  In this case, by construction the value
  of $d_{x,y}\phi$ where $d_\eta \phi=0$ is just the tangent to the
  geodesic flowout from $(x=0,\xi, y,\eta)$ where $\xi^2
  +\smallabs{\eta}^2=1,$ so that it agrees with $d_{x,y} \dist((x,y),(0,y')),$
hence the new
  oscillatory term becomes exactly $e^{i\dist \sqrt{z_h}/h}.$
  The amplitude has an expansion in powers of
  $h/\sqrt{z_h},$ with an overall factor of $(h/\sqrt{z_h})^{(n-1)/2},$
  since the $\eta$ integral was over $\RR^{n-1}.$


We now proceed to extend the
structure theorem for $\ext: f_h \to u_h$ globally along a given
geodesic, even past conjugate points.
\begin{prop}
Fix a geodesic $\gamma$ intersecting $S$ orthogonally at
$p_0\in S.$  Let
$p\in \gamma$ not be conjugate to $p_0.$ Subject to the
assumptions of Proposition~\ref{proposition:extension}, microlocally
near $(p, \dot\gamma)$ we have $u_h=\ext (f_h)$ with
\begin{equation}\label{extensionparametrix}
\ext f_h =\int \bigg( \frac{h}{\sqrt{z_h}}\bigg)^{-(n-1)/2}  a(p,y',\eta; z_h, h) e^{ i \dist(p,y')\sqrt{z_h}/h} f_h(y')\, d\eta \, dy'.
\end{equation}
The function $\dist$ should be interpreted here (potentially far
beyond the injectivity radius) as the smooth function given by
distance along the family of geodesics remaining microlocally close to
$\gamma,$ i.e.\ specified by the locally-defined smooth inverse of the
exponential map.
\end{prop}

\begin{proof}
Using the preceding construction, for any $x_0$ small enough we can
construct $\ext_{x_0}$ starting from the surface $x=x_0.$ Denoting by
$\res_{x_0}$ the restriction to the surface $x=x_0$ we then have, by
construction the following semigroup property :
\begin{equation}\label{eq:semigroup}
\ext_x \res_x \ext = \ext.
\end{equation}
The proof then follows by decomposing the geodesic into small enough
steps $[x_i,x_{i+1}]$ and applying stationary phase.
\end{proof}

\begin{defi}
  Since we will often be referring to symbols that have a half-step
  polyhomogeneous expansion in $h/\sqrt{z_h}$ in the product type case
  (i.e., with metric of the form \eqref{productmetric} near the boundary),
  but only have a leading order asymptotics modulo
  $O((h/\sqrt{z_h})^{1/2-0})$ times the leading order power in the
  general case, we will simply say that the function in question has
  \emph{adapted half-step asymptotics} to cover both cases.
\end{defi}
This distinction will not be of great importance for the results
presented here, but we will maintain it in the hope of future
applications in which the product type case may offer stronger results.

\subsection{Undergoing one diffraction}

Now we study the composition of the microlocalized extension operator
and the microlocalized wave propagator when one diffraction occurs. 

Let $\gamma$ be a geodesic orthogonal to $S$ at $p_0 \in S$ and terminating at cone
point $Y_j,$ with $p_0$ not conjugate to $Y_j$ (see \cite{BaWu:13} for
a definition of conjugacy in this context).  Away from the conjugate locus of $p_0$ let $\dist_S$ be
the distance function from $S$ measured along geodesics near
$\gamma.$  Let $\dist_j$ denote the distance function from the cone
point $Y_j,$ and let $\dist_{S,j}$ denote the restriction of $\dist_j$
to $S,$ near $p_0.$
\begin{prop}
Let $\WF_h' A$ and $\WF_h' B$ contain only diffractively related
points, and let $\sqrt{z_h} \in \Omega_\ep.$   There exists a symbol $c\in S_0$ with adapted half-step asymptotics
such that for \begin{equation}\label{sumofdistances}\dist_j(x) <t<\dist_{S,j}(p_0)+\dist_j(x),\end{equation} we have
$$A U(t) B \ext (x,y')=  c(x,y';h/\sqrt{z_h}) 
e^{i(\dist_{S,j}(y')+\dist_j(x)-t)\sqrt{z_h}/h}.$$
\end{prop}
\begin{proof}
We will employ stationary phase to
compose the two oscillatory integral
representations \eqref{FWpropagator} and \eqref{extension}.  In particular, we must evaluate an integral of the form
$$
\bigg( \frac{h}{\sqrt{z_h}}\bigg)^{-(n-1)/2}
\int\int_0^\infty a_D e^{i (\dist_j(x)+ \dist_j(x'')-t)\xi} a_S e^{i \dist_S
  (x'',y')\sqrt{z_h}/h}\, d\xi \, dx'',
$$
where $a_S$ has a complete asymptotic expansion in $\sqrt{z_h}/h$
while $a_D$ has adapted half-step asymptotics.
We would like to formally make the change of variables $\xi=\xi'\sqrt{z_h}/h$ and
then do stationary phase (as in Appendix~\ref{appendix:statphase}) in the small parameter $h/\sqrt{z_h}
\downarrow 0;$ justifying this deformation into the complex in fact
proceeds as follows.  To begin, we let $\eta=h\xi$ so that we are
trying to evaluate
$$
h^{-1}\bigg( \frac{h}{\sqrt{z_h}}\bigg)^{-(n-1)/2}
\int\int_0^\infty a_D e^{i (\dist_j(x)+ \dist_j(x'')-t)\eta/h} a_S e^{i \dist_S
  (x'',y')\sqrt{z_h}/h}\, d\eta \, dx''.
$$
By the usual method of nonstationary phase, we then find that the
integral is unchanged modulo $O(h^\infty)$ if we insert a compactly
supported cutoff $\chi(\eta),$ equal to $1$ for $\abs{\eta}<2.$
Finally, we replace $\chi(\eta) a_D(\dots,\eta) $ with an almost-analytic
extension in $\eta,$ and set $\xi'=\eta/\sqrt{z_h},$ justifying the resulting
contour deformation exactly as in the proof of
Lemma~\ref{lemma:quadphase} in the appendix.  This finally yields
$$
\bigg( \frac{h}{\sqrt{z_h}}\bigg)^{-(n+1)/2}
\int\int_0^\infty a_D e^{i (\dist_j(x)+ \dist_j(x'')-t)\xi'\sqrt{z_h}/h} a_S e^{i \dist_S
  (x'',y')\sqrt{z_h}/h}\, d\xi' \, dx''.
$$
Finally we apply Proposition~\ref{proposition:statphase} to justify a
formal stationary phase expansion of this expression.
Stationary points are
where
\begin{align}
  \label{eq:1}
  \dist_j(x)+ \dist_j(x'')-t&=0,\\
 \nabla_{x''} \dist_j(x'') +\xi' \nabla_{x''} \dist_S(x'',y')&=0.
\end{align}
The latter equation implies that $\xi'=1$ and that $x''$ must lie on
the unique geodesic near $\gamma$ connecting $y'$ to $Y_j,$ hence at the stationary
point, $\dist_j(x'')+
\dist_S(x'',y')= \dist_{S,j}(y').$  Thus stationary phase yields a result
of the form
$$
\text{symbol} \cdot e^{i(\dist_{S,j}(y') + \dist_j(x)-t)\sqrt{z_h}/h}.
$$
Since the stationary phase is in the $n+1$ variables
$(x'',\xi)$ we gain an overall factor $(h/\sqrt{z_h})^{(n+1)/2},$
reducing the overall power of $h/\sqrt{z_h}$ to zero.
\end{proof}

We can extend this formula for larger times by precomposing and
postcomposing with $U(t)$ (see \cite[Section 5]{FoWu:17}). Using the
semigroup property for $U(t)$, and stationary phase to compute the
compositions, we see that this formula continues to
hold, correctly interpreted for $x$ far from $Y_j$ as well, as long as
$x$ is not conjugate to $Y_j$ and 
we microlocalize near the geodesic connecting $Y_j$ to $x.$
\begin{prop}
  Let $A$ be microsupported sufficiently close to a geodesic $\gamma$ coming
  from $Y_j,$ in a small neighborhood of a point not conjugate to
  $Y_j$ along $\gamma.$  There exists a symbol $c\in S_0$ with adapted half-step
  asymptotics such that for
$$\dist_j(x) <t<\dist_{S,j}(p_0)+\dist_j(x),$$
$$A U(t) \ext (x,y')= c(x,y';h/\sqrt{z_h}) 
e^{i(\dist_{S,j}(y')+\dist_j(x)-t)\sqrt{z_h}/h},$$
where the distance is interpreted as distance along geodesics
microlocally close to $\gamma.$
\end{prop}

Finally, we examine what happens when we again restrict to a
hypersurface.  Let the geometric setup be as above, with $S'$ a
hypersurface orthogonal
to a geodesic from $Y_j.$  Let $\res$ denote the operation
of
restriction to $S',$ and let $\dist_{j,S'}$ denote the distance
function from the cone point $Y_j$ to $S'.$
\begin{prop}\label{proposition:structure}
There exists a symbol $c\in S_0$ such that for 
$$\dist_j(y) <t<\dist_{j}(y')+\dist_j(y),$$
$$\res U(t) \ext (y,y')=  c(y,y';h/\sqrt{z_h})
e^{i(\dist_{S,j}(y')+\dist_{j,S'}(y)-t)\sqrt{z_h}/h}.$$
\end{prop}

\section{Monodromy data}

In this section we examine relations that hold among the restrictions
of a resonant state to cross sections of geodesics in $\Gamma.$

Consider the directed multigraph whose vertices are the cone points
and whose edges are the oriented geodesics connecting pairs of cone
points.  Let $E$ be the edge set of this graph and for $e,f \in E$
write $e\to f$ if $e$ and $f$ are adjacent in the sense of digraphs,
i.e.\ if $e$ terminates at some vertex $Y_ i$ while $f$ emanates from
$Y_i.$ For any edge $e$ let $\bar e$ denote the edge corresponding to
the same geodesic but with opposite orientation.

 To each
directed edge $e$ we fix a patch of oriented hypersurface $S_e$
intersecting it orthogonally at a point not conjugate to either of the cone
points at which $e$ originates and terminates.  There is no
particular need to require that $S_e$ and $S_{\bar e}$ be identical as unoriented
surfaces (but of course that is one option).  We arrange for the sake
of simplicity that each $S_e$ intersects only the edges $e, \bar e.$

For each $e\in E$ let $d_e^\pm$ denote the functions on $S_e$ given by the
distances to the cone points at the end point ($+$) and starting point
($-$) of $e$.  Let $\len_e$ denote the length of the edge $e$ (hence
of course $\len_e=\len_{\bar e}$).

For each edge $e\in E$, let $\ext_e$ resp.\ $\res_e$ respectively
denote the parametrix for the extension operator
\eqref{extensionparametrix} and the restriction operators from/to the
oriented surface $S_e$ orthogonal to this edge (as discussed in
\S\ref{section:restriction}).  Given $f,e \in E$ with $f \to e,$ with
incidence at the cone point $Y_j$ let $t_{fe}$ denote a number
exceeding $\dist(S_f, Y_j)+\dist(Y_j, S_e)$ by a small fixed quantity
$\epsilon_1>0.$

\begin{lemma}
Consider a sequence of solutions to
$$
(P_\theta-z_h) u_h=0, \quad h \downarrow 0.
$$
For each $e \in E,$
\begin{equation}\label{extensionrestriction}
\res_e u_h -  \sum_{f \to e} e^{it_{fe} \sqrt{z_h}/h}  \res_e U(t_{fe}) \ext_f \res_f u_h=O(h^\infty).
\end{equation}
\end{lemma}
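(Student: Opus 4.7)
The plan is to decompose $u_h$ microlocally near the cone point $Y_j$ at which $e$ originates into contributions from each incoming edge $f\to e$, propagate these forward through $Y_j$ by $U(t_{fe})$, and match the sum with $\res_e u_h$ via an eigenvalue-type reproducing identity.

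I would begin by combining Corollary~\ref{corollary:propagation} with the time-symmetric part~\ref{prop:fwdback} of Proposition~\ref{proposition:variablepropagation} to conclude $\WF_h u_h \subset \Gamma_+\cap\Gamma_-=\Gamma$. Near $Y_j$ the wavefront set then lies along the incoming rays $\dot\gamma_f$ (for $f\to e$) together with outgoing rays from $Y_j$. I would next choose a semiclassical microlocal partition of unity $\{A_f\}_{f\to e}$ near $Y_j$ with each $A_f$ elliptic on a tight conic neighborhood of $\dot\gamma_f$ and complementary cutoff $A_\infty = I-\sum_f A_f$ microsupported off every incoming ray. Proposition~\ref{prop:u=ERu} gives $u_h = \ext_f \res_f u_h + O(h^\infty)$ microlocally near each $(p_f,\dot\gamma_f)$, whence $A_f u_h = A_f \ext_f \res_f u_h + O(h^\infty)$; the cutoff $A_f$ can then be dropped since it is elliptic on the microsupport of $\ext_f \res_f u_h$.

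Applying $U(t_{fe})$ and restricting to $S_e$, I would observe that the backward diffractive flowout of $(p_e,\dot\gamma_e)$ over times $[0,t_{fe}]$ traverses $Y_j$ exactly once and lands in a neighborhood of each $(p_f,\dot\gamma_f)$, meeting only the microsupports of the $A_f$. By the diffractive propagation of singularities from Proposition~\ref{proposition:propofsings}, this gives $\res_e U(t_{fe}) A_\infty u_h = O(h^\infty)$ microlocally at $p_e$. Summing over $f$ then yields
\begin{equation*}
\res_e U(t_{fe}) u_h = \sum_{f\to e} \res_e U(t_{fe}) \ext_f \res_f u_h + O(h^\infty).
\end{equation*}

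The remaining and main technical step is to verify the microlocal identity $\res_e U(t_{fe}) u_h = e^{-it_{fe}\sqrt{z_h}/h}\res_e u_h + O(h^\infty)$ at $(p_e,\dot\gamma_e)$, which combined with the display above and multiplication by $e^{it_{fe}\sqrt{z_h}/h}$ yields the lemma. The key observation is that $u_h$ is a microlocal approximate $\sqrt{\Lap}$-eigenfunction throughout the interior: since $(P_\theta-z_h)u_h=0$ and $P_\theta = h^2\Lap$ on $B(0,R_1)$, the factorization $h^2\Lap - z_h = (h\sqrt\Lap - \sqrt{z_h})(h\sqrt\Lap + \sqrt{z_h})$ combined with ellipticity of the second factor on $\WF_h u_h$ gives $(h\sqrt\Lap - \sqrt{z_h})u_h = O(h^\infty)$ microlocally on $\WF_h u_h\subset T^*B(0,R_0)$. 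Writing $v(t) := U(t)u_h - e^{-it\sqrt{z_h}/h}u_h$ as a Duhamel integral with source $(\sqrt\Lap - \sqrt{z_h}/h)u_h$, and noting that the backward diffractive flowout of $(p_e,\dot\gamma_e)$ by time $t_{fe}$ stays in the interior where this source is microlocally $O(h^\infty)$, the wavefront of $v(t_{fe})$ at $(p_e,\dot\gamma_e)$ vanishes to infinite order in $h$. This is the main obstacle, as it requires extending the short-time reproducing identity of Lemma~\ref{lem:Utu} through the cone point $Y_j$ via the diffractive flow; the Duhamel argument localizes the required control to the interior portion of the flow, where the microlocal eigenfunction equation holds.
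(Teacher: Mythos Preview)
Your proposal has two genuine gaps.

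First, the step you flag as the ``main obstacle'' is not one, and your Duhamel fix would fail. Lemma~\ref{lem:Utu} already gives $U(t)u_h=e^{-it\sqrt{z_h}/h}u_h+O(h^\infty)$ in $L^2(V)$ for any $V\Subset B(0,R_1-|t|)\cap X^\circ$, with no constraint that the time-$t$ flow avoid cone points: its proof uses finite speed of propagation for $\sin(t\sqrt\Lap)/\sqrt\Lap$ together with the fact that $(h^2\Lap-z_h)u_h=0$ throughout $B(0,R_1)$, and invokes the pseudodifferential calculus for $\sqrt\Lap$ only on the \emph{target} set $V$. So the reproducing identity already holds through $Y_j$. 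Your Duhamel argument, by contrast, requires the source $(\sqrt\Lap-\sqrt{z_h}/h)u_h$ to be $O(h^\infty)$ along the full backward flowout; but $\sqrt\Lap$ is not pseudodifferential at the cone points (Appendix~\ref{appendix:micro} only gives this on $X^\circ$), and the flowout passes through $Y_j$, so the source is uncontrolled exactly where it matters.

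Second, and more seriously, your handling of the $f$-dependent times $t_{fe}$ is inconsistent. The displayed identity $\res_e U(t_{fe}) u_h = \sum_{f\to e} \res_e U(t_{fe}) \ext_f \res_f u_h + O(h^\infty)$ has a free $f$ on the left and a summed $f$ on the right. You place the cutoffs $A_f$ near $Y_j$, but the backward flowout of $(p_e,\dot\gamma_e)$ by time $t_{fe}$ lands near $S_f$ along the edge $f$ and at some \emph{other} point along each $f'\neq f$, not near $S_{f'}$ and not in $\WF_h' A_{f'}$; so neither the partition of unity nor the extension operators $\ext_{f'}$ line up with a single choice of time. (Relatedly, ``dropping $A_f$'' is not justified: $A_f$ is elliptic near $Y_j$, not near the landing point of the time-$t_{fe}$ backward flow.) The paper resolves this by a two-step splitting: first apply the reproducing identity with a single \emph{short} time $t=d(S_e,Y_j)+\epsilon$ to reach the $\epsilon$-sphere about $Y_j$ and decompose via cutoffs $A_f$ there; then for each $f$ apply the reproducing identity a \emph{second} time with the $f$-specific increment $t_{fe}-t$, and use Egorov $A_f U(t_{fe}-t)=U(t_{fe}-t)A_f'$ to transport the cutoff back to $A_f'$ microsupported near $S_f$, where one can legitimately replace $A_f'u_h$ by $\ext_f\res_f u_h$. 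A minor side remark: your claim $\WF_h u_h\subset\Gamma$ is false---only $\WF_h u_h\subset\Gamma_+$ holds, since forward propagation in the deep scaling region \emph{gains} regularity and cannot be run backward; this does not affect the local picture near $Y_j$, but the stated justification is wrong.
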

\begin{proof}
We first recall that, for any $t,$ 
$$
\res_e u_h =\res_e e^{it\sqrt{z_h}/h} U(t) u_h+O(h^\infty);
$$
choose $t=d+\eps$ where $d$ is the distance from $S_e$ to the
cone point $Y_j$ from which the edge $e$ emanate and $\eps$ is small
enough so that that $t< t_{fe}-\eps$ for all $f\to e.$  By propagation of
singularities (Proposition~\ref{proposition:propofsings}) $\res_e u_h$
is determined, modulo $O(h^\infty),$ by $u_h$ on the sphere of radius
$\eps$ centered at $Y_j.$ Denote by $\Vcal$ the $\frac{\eps}{2}$
neighborhood of this sphere. Since $\WF_h u_h$ is a subset of the geodesics
represented by the edges in $E,$ we may let $A_f$ denote
microlocalizers along all edges $f \to e,$ supported in $\Vcal$ with
$\WF_h'(I-A_f)$ disjoint from all 
points near $f$ that are diffractively related to
points in $S_e$ in time $t.$
By the semiclassical Egorov theorem \cite[Theorem
11.1]{Zw:12} for $e^{-is\sqrt{\Lap}}=e^{-is\sqrt{h^2\Lap}/h}$ (with
$s=t_{fe}-t$), we may write
$$
A_f U(t_{fe}-t)=U(t_{fe}-t) A_f'
$$
where
$A_f'$ is microsupported near the intersection of $f$ with $S_f$
with $\WF_h'(I-A_f')$ disjoint from a smaller neighborhood of this intersection.

By Proposition~\ref{proposition:propofsings}, then,
\begin{equation}\label{recurrence}
\res_e u_h =\sum_{f\to e} \res_e e^{it\sqrt{z_h}/h} U(t) A_f u_h+O(h^\infty).
\end{equation}
and by \eqref{reproducing} and a further application of Proposition~\ref{proposition:propofsings},
\begin{align*}
\res_e e^{it\sqrt{z_h}/h} U(t) A_f u_h &=\res_e e^{it_{f e}\sqrt{z_h}/h} U(t) A_f U(t_{f e}-t) u_h+O(h^\infty)\\
&= \res_e e^{it_{f e}\sqrt{z_h}/h} U(t_{f e}) A_f' u_h+O(h^\infty) \\
&=\res_e e^{it_{f e}\sqrt{z_h}/h} U(t_{f e}) \ext_f \res_f u_h+O(h^\infty).\qed
\end{align*}
\noqed
\end{proof}

Thus we conclude that the functions  $\res_e u_h$ satisfy a set of
relations which we can now employ to deduce constraints on $\Im z_h.$
By Proposition~\ref{proposition:structure} we find the
following:
\begin{prop}
There exist symbols $c_{fe}$ of order zero with adapted half-step asymptotics such
that for each $e,$
\begin{equation}\label{ftoe}
\res_e u_h = \sum_{f\to e} A_{fe} \res_f u_h+O(h^\infty)
\end{equation}
where $A_{fe}$ has Schwartz kernel
\begin{equation}\label{A}
A_{fe}(y,y') =  c_{fe}(y,y';h/\sqrt{z_h})  e^{i (d_e^-(y)+d_f^+(y')) \sqrt{z_h} /h}.
\end{equation}
\end{prop}
\begin{proof}
We insert the representation of the wave propagator from
Proposition~\ref{proposition:structure} into \eqref{extensionrestriction}.
\end{proof}

Thus we have a matrix equation with operator-valued entries for the
restriction to the hypersurfaces.

\begin{prop}\label{proposition:formofrestriction} 
Let $M$ be the constant used in defining the scaling region and $L_0$
the longest geodesic between two cone points. For each $e,$ there exists a smooth
  amplitude $s_e(y;h)\in h^{-1-ML_0} S(1)$ such that
$$
\res_e u_h = e^{i d_e^-(y)\sqrt{z_h}/h} s_e(y;h)+O(h^\infty).
$$
\end{prop}

\begin{rema}
No claim is made here about polyhomogeneity of $s_e$ in $h.$
\end{rema}

\begin{proof}
We want to prove that $y\mapsto s_e(y;h)$ is smooth and that any seminorm
$\| \partial^\beta s(\cdot \,;\,h)\|_\infty$ is $O(h^{-1-ML_0}).$ The fact
that $s$ is smooth follows from the fact that $u_h$ is smooth by
ellipticity. Using the eigenvalue equation we also have 
$\| u_h \|_{H^k_\loc} = O (h^{-k}),$ so that, roughly speaking, we
lose one power of $h$ by differentiating. The content of the proposition is
thus that this loss actually does not occur.

We apply the operator given in \eqref{A} to $u_h,$ noting that
we can pull the factor $ e^{i d_e^-(y)\sqrt{z_h} /h}$ out
of the integral.  What remains is to show that the remaining factors of the form
\begin{equation}\label{smoothamplitudes}
\tilde{s}_{fe} (y;h) \,=\, \int  c_{fe}(y,y';h/\sqrt{z_h})  e^{id_f^+(y') \sqrt{z_h}
  /h} (\res_f u_h)(y') \, dy'
\end{equation}
are in fact smooth amplitudes.


Using Sobolev embedding, and the fact that $u_h$ is a solution to the
eigenvalue equation, we have 
\[
\| \res_f u_h\|_{L^2(y)} \,\leq\, C h^{-1}. 
\]
Replacing in \eqref{smoothamplitudes} and using the Cauchy-Schwarz
inequality  we obtain  
\[
| \tilde{s}_{fe}(y,h)| \,\leq\, Ch^{-1} h^{-ML_0} \left( \int |c_{fe}(y,y';
\frac{h}{\sqrt{z_h}})|^2 dy'\right)^{\frac{1}{2}}.
\]
The integral is uniformly bounded (in $h$) owing to the fact that
$c_{fe}$ is a symbol.
Moreover, $\tilde{s}_{fe}$ enjoys
iterated regularity under differentiation in $y,$ as $y$ derivatives
of \eqref{smoothamplitudes} only hit the factor $c_{fe},$ hence all
$y$-derivatives are $O( h^{-1-ML_0}).$
\end{proof}

It is convenient to rewrite \eqref{ftoe} in terms of the amplitudes
$s_e$ :
\begin{equation}\label{eq:s=Ms}
s_e\,=\, \sum_{f\rightarrow e} e^{i\ell_f\frac{\sqrt{z_h}}{h}} \Mcal_{fe} s_f, 
\end{equation}
where $\Mcal_{fe}$ has Schwartz kernel 
\begin{gather}\label{def:Mef}
M_{fe}(y,y') =  c_{fe}(y,y';\frac{h}{\sqrt{z_h}})  
e^{i \frac{\sqrt{z_h}}{h}\phi_f(y,y')}~~~\\
\nonumber \mbox{with}~~\phi_f(y,y') = d_f^+(y')+d_f^-(y')-\ell_f.
\end{gather}
Observe that the phase $\phi_f$ has a unique non-degenerate critical point at
$y'=0$ and satisfies $\phi_f(0)=0.$ 

Equation \eqref{eq:s=Ms} can be rewritten as  
\[
\left( \Id-\Mcal \right) \mathbf{s} \,=\,O(h^\infty),
\]
which is typical of a monodromy operator in such settings.   In our setting, we can pass to a discrete set of restriction data, given by the jets
of the restriction to $S_e$ at $e.$  (Compare, e.g., Proposition 4.3 of
\cite{Burq:Coin} and see also \cites{NSZ:11,Nonn:12}). This will reduce the monodromy
equation to a finite dimensional system.  

For each $\alpha \in \NN^{n-1},$ we thus let
$$
s_e^\alpha=\pa_y^\alpha s_e,
$$
so that
$$
s_e(y) = \sum_{\smallabs{\alpha}<N}  s_e^\alpha \frac{y^\alpha}{\alpha
  !} +R_{e,N},\quad R_{e,N}=O(\smallabs{y}^N),
$$

Applying the stationary phase computation in
Appendix~\ref{appendix:statphase} gives the asymptotic expansion 
\begin{equation}\label{eq:Myalpha}
\Mcal_{fe} ({y'}^\alpha)\,\sim\, \left(
  \frac{h}{\sqrt{z_h}}\right)^{\frac{n-1}{2}}
\sum_{k\geq \frac{|\alpha|}{2}} m_{ef\alpha k} (y; \frac{h}{\sqrt{z_h}}) \left( \frac{h}{\sqrt{z_h}}\right)^k,
\end{equation}
where $m_{ef\alpha k}$ is a symbol.
 
\begin{rema}
If $|\alpha|$ is odd, a closer inspection shows that this
contribution is $O(h^\infty).$
\end{rema}

\begin{prop}\label{proposition:relation}
\mbox{}\begin{enumerate}\item
For each $f\to e,$ $\alpha,$ $\beta$ there exist
$C(\alpha,\beta, e,f;h),$ bounded in $h\downarrow 0,$ such that
\begin{equation}\label{relation}\begin{aligned}
s_e^\alpha=\sum_{f\to e}\sum_{j<N} \sum_{\smallabs{\beta}\leq 2j}
C(\alpha,\beta,j,e,f;\frac{h}{\sqrt{z_h}})
&\big( \hsh \big)^{(n-1)/2+j} e^{i \len_f \zsh}
s_f^\beta\\ &+O\bigg(\big( \hsh \big)^{(n-1)/2+N} e^{i \len_f \zsh}\bigg).\end{aligned}
\end{equation}
The coefficients
$C(\alpha,\beta,j,e,f; \hsh)$ enjoy adapted half-step
asymptotics, and in particular
$$
C(\alpha, 0,0, e, f; \hsh)\equiv C(\alpha, e, f)+O((\hsh)^{1/2-0}),
$$
with $C(\alpha, e, f)$ independent of $\hsh.$
\item 
For all $m_0>0$ there exist $m_1>0$ and $N_1 \in \NN$ such that
whenever $s_e^\alpha=O(h^{m_1})$ for all $e\in E,$ $\smallabs{\alpha}
\leq N_1,$ we have
$$
s_e(y)=O(h^{m_0}) \text{ for all } e \in E.
$$
\end{enumerate}
\end{prop}

\begin{proof}
We truncate the asymptotic expansion \eqref{eq:Myalpha} at $k=N.$ 
We plug it into \eqref{eq:s=Ms} and then extract
the coefficient $s_e^\alpha$ from this expansion. This gives
\eqref{relation}. The assertions on the coefficients follow by inspection. 
For the last assertion, we first observe that 
\[
|e^{i \len_f \zsh}|\,=\,O(h^{-ML_0}),
\]
where $M$ is the constant used to define the scaling region and $L_0$
the longest geodesic between cone points. We then choose $N$ so that 
$ \frac{n-1}{2}+N-ML_0 \geq m_0$. We then set $N_1=2N,$ and
$m_1=m_0-\frac{n-1}{2}$, so that in \eqref{relation} all the terms are 
$O(h^{m_0}).$  
\end{proof}

\section{Proof of Theorem~\ref{theorem:gap}}\label{section:proof}

We now prove Theorem~\ref{theorem:gap}.

Fix any $\ep>0.$  We assume throughout that
$$\sqrt{z_h} \in \Omega_\ep\equiv \{
  (-\Lambda +\epsilon)\hlh <\Im\sqrt{ z_h}<0,\ \Re \sqrt{z_h} \in [1-\ep,1+\ep] \}.$$
Our aim is to show that if $z_h$ is such a
  sequence of resonances, with $\Re \sqrt{z_h} \to E$ and and $\Im
  \sqrt{z_h} \sim -\nu \hlh$ then we must have $\nu= (n-1)/2L_0,$
  while $\Re \sqrt{z_h}$ satisfy a quantization condition.  As discussed
  above, we fix $M,$
  the parameter in our complex scaling, with $M \gg \Lambda$ so that
  square roots of eigenvalues of $P_\theta$ in $\Omega_\ep$ agree with resonances of
  $P$ in that set.  By
  Proposition~\ref{proposition:variablepropagation} and
  Proposition~\ref{proposition:propofsings}, there exists some $m_0>0$
  such that if $\WF^{m_0} u_h \cap \Gamma=\emptyset,$ then $\WF^\ep
  u_h=\emptyset,$ hence $u_h$ could not possibly be an
  $L^2$-normalized resonant state. 
It
  thus suffices to show that if $\sqrt{z_h} \in \Omega_\ep$ does not satisfy the
  quantization condition or the condition on the imaginary part, we
  must have $\WF_h^{m_0} u_h\cap \Gamma=\emptyset$ for this fixed,
  potentially large, $m_0>0.$  Moreover, again by
  Proposition~\ref{proposition:variablepropagation}, it suffices to
  show absence of $\WF^{m_1} u_h$ at the intersection of
  $\Gamma$ with the edges $e \in E,$ for some potentially larger
  $m_1>0.$  To show this, in turn, we see by
  the second part of Proposition~\ref{proposition:relation} that it suffices to show that
if the desired conditions on $z_h$ are not met then each $s_e^\alpha$ is $O(h^{m_2})$ for all $\alpha$ with
  $\abs{\alpha}\leq N_2$ for some large (but geometrically
  determined) $N_2.$

 Thus, we suppose that either
  the quantization condition or the imaginary part condition is
  \emph{violated} and we aim to show that consequently
  $s_e^\alpha=O(h^{m_2})$ for this finite list of values of $\alpha.$

For any $N>N_2,$ we let $A_N$ denote the weighted directed edge adjacency
matrix for a multigraph with multiple edges $(e,\alpha)$ for each
edge $e\in E$ as above, but now with $\alpha \in \NN^{n-1}$ a multi-index, with
$\smallabs{\alpha}<N$ and
with $(e,\alpha) \to (f,\beta)$ an adjacency iff $e\to f$ in our
original multigraph of directed geodesics with edge set $E.$
The $(e,\alpha, f,\beta)$ entry of
$A_N$ may thus be nonzero only if $f \to e,$ and in that case is defined
to be:
$$
(A_N)_{e,\alpha,f,\beta} \equiv \sum_{j \in \NN: \smallabs{\beta}\leq 2j<2N} C(\alpha,\beta,j, e,f;\hsh)
\big( \hsh \big)^{(n-1)/2+j} e^{i \len_f \zsh},
$$
with $C(\alpha,\beta, j,e,f;\hsh)$ given by Proposition~\ref{proposition:relation}.

\begin{lemma}\label{lemma:matrixrelation}
If $\sqrt{z_h} \in \Omega_\ep,$ 
and
$$
(P_\theta-z_h)u_h=0
$$
then viewed as an $h$-dependent vector $\bs=\{s_e^\alpha\},$ the Cauchy data $s_e^\alpha$ of $u_h$ on the hypersurfaces $S_e$
satisfies
\begin{equation}\label{relation2}
\bs =A_N \cdot \bs+\br,\quad \br =O(h^{(n-1+N)/2-L_0(\Lambda -\epsilon)}).
\end{equation}
\end{lemma}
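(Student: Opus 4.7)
The plan is to assemble the scalar relations \eqref{relation} of Proposition~\ref{proposition:relation}(1), one for each pair $(e,\alpha)$ with $|\alpha|<N$, into a single vector-valued identity. Indexing the entries of $\bs$ by pairs $(e,\alpha)$, the triple sum appearing on the right hand side of \eqref{relation} is, by inspection, precisely the $(e,\alpha)$-row of $A_N\bs$ with $A_N$ defined exactly as stated in the lemma: the contribution from a given $(f,\beta)$ collects all $j$ with $|\beta|\leq 2j<2N$ and is multiplied by $(\hsh)^{(n-1)/2+j}e^{i\ell_f\sqrt{z_h}/h}$, while the entry is zero whenever $f\not\to e$. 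Calling $r_{e,\alpha}$ the remainder term in \eqref{relation}, the $(e,\alpha)$-component of $\bs-A_N\bs$ is exactly $r_{e,\alpha}$, and concatenating over all $(e,\alpha)$ yields the asserted identity $\bs = A_N\bs + \br$.

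It then remains only to estimate $\br$ under the hypothesis $\sqrt{z_h}\in\Omega_\epsilon$. From Proposition~\ref{proposition:relation} the individual remainder satisfies
$$
r_{e,\alpha}= O\!\Bigl(\bigl(\hsh\bigr)^{(n-1)/2+N}\,e^{i\ell_f\sqrt{z_h}/h}\Bigr).
$$
Since $\Re\sqrt{z_h}\in[1-\epsilon,1+\epsilon]$ keeps $|\sqrt{z_h}|$ bounded between positive constants, the polynomial prefactor is $O(h^{(n-1)/2+N})$, which is certainly $O(h^{(n-1+N)/2})$. For the exponential factor, the inequality $\Im\sqrt{z_h} > -(\Lambda-\epsilon)h\log(1/h)$ built into the definition of $\Omega_\epsilon$ gives
$$
\bigl|e^{i\ell_f\sqrt{z_h}/h}\bigr| = e^{-\ell_f\,\Im\sqrt{z_h}/h} \leq e^{\ell_f(\Lambda-\epsilon)\log(1/h)} \leq h^{-L_0(\Lambda-\epsilon)},
$$
using $\ell_f\leq L_0$. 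Multiplying the two bounds yields $\br = O(h^{(n-1+N)/2 - L_0(\Lambda-\epsilon)})$, which is the stated estimate.

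The substance of the lemma is really bookkeeping: the analytic content (propagation through cone points, stationary phase in the restriction--extension formalism, the adapted half-step expansions) is all packaged in Proposition~\ref{proposition:relation}. I do not anticipate any real obstacle; the only point worth emphasizing is the interplay between the semiclassical gain $h^{(n-1)/2+N}$ from the diffractive stationary phase and the loss $h^{-L_0(\Lambda-\epsilon)}$ generated by the non-self-adjointness of the complex--scaled operator inside $\Omega_\epsilon$. This tradeoff is exactly what sets up the fixed-point equation $(\Id-A_N)\bs = \br$ used in Section~\ref{section:proof}, where one must argue that, outside of the prescribed quantization and imaginary-part windows for $\sqrt{z_h}$, the operator $\Id - A_N$ is invertible with sufficient control to force $\bs$ itself to be $O(h^{m_1})$.
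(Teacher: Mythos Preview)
Your overall approach matches the paper's---the lemma really is bookkeeping on top of Proposition~\ref{proposition:relation}---but there is a genuine gap in your identification of the triple sum in \eqref{relation} with $(A_N\bs)_{e,\alpha}$. In \eqref{relation} the inner sum runs over all $\beta$ with $\smallabs{\beta}\leq 2j$, and since $j$ ranges up to $N-1$ this allows $\smallabs{\beta}$ as large as $2N-2$. The vector $\bs$, however, carries only the components $s_f^\beta$ with $\smallabs{\beta}<N$, so $(A_N\bs)_{e,\alpha}$ picks up only those. The terms in \eqref{relation} with $N\leq \smallabs{\beta}\leq 2j<2N$ are therefore \emph{not} captured by $A_N\bs$ and must be absorbed into $\br$.

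For each such term one has $j\geq \smallabs{\beta}/2\geq N/2$, so its prefactor $(\hsh)^{(n-1)/2+j}$ is at best $O\big((\hsh)^{(n-1+N)/2}\big)$. This, rather than the explicit $O\big((\hsh)^{(n-1)/2+N}\big)$ tail you isolated, is what produces the exponent $(n-1+N)/2$ in the lemma; your parenthetical that the explicit tail is ``certainly $O(h^{(n-1+N)/2})$'' is correct but does not explain why the weaker exponent is the one stated. Moreover, these dropped terms still carry the factors $s_f^\beta$ with $\smallabs{\beta}\geq N$, so bounding them requires an a~priori estimate on those higher Taylor coefficients. This is exactly why the paper's proof explicitly invokes the boundedness of the $s_e^\alpha$ (cf.\ Proposition~\ref{proposition:formofrestriction}), a step absent from your argument.
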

\begin{proof}
This is just a truncation of the result of
Proposition~\ref{proposition:relation}, where we also use the fact
that for $\sqrt{z_h} \in \Omega_\ep,$
$z_h, z_h^{-1}=O(1)$ while $\Im \sqrt{z_h}>(-\Lambda+\epsilon)\hlh,$
hence $$\abs{e^{i \len_f \sqrt{z_h}/h}}=O(h^{-\len_f(\Lambda-\ep)})
=O(h^{-L_0 (\Lambda-\ep)}),$$ while the $s_e^\alpha$ are
all bounded as $h \downarrow 0.$
\end{proof}

Since the entries in $A_N$ are all seen to be
$O(h^{(n-1)/2-L_0(\Lambda-\ep)})$ by the same reasoning as in the
proof of Lemma~\ref{lemma:matrixrelation}, we find applying $A_N$ to
both sides of our relation that 
\begin{equation}\label{squaredmatrix}
\begin{aligned}
A_N^2 \bs &=
A_N \bs -A_N \br\\
&=\bs-\br -A_N\br\\
&=\bs +
O(h^{(n-1+N)/2-L_0(\Lambda-\ep)})+O(h^{(n-1)+N/2-2L_0(\Lambda-\ep)})\\
&=\bs +O(h^{(n-1+N)/2-L_0(\Lambda-\ep)}),
\end{aligned}
\end{equation}
since $\Lambda \leq (n-1)/2L_0.$

Now we examine the entries in $A_N^2.$ This matrix has
diagonal entries, corresponding to two-cycles in our digraph.  The
largest such entries are the two with zero multi-index that correspond to traversing $f,\bar
f$ or $\bar f, f,$ for $f$ a maximal edge, i.e., $\len_f=L_0.$ 
{We assume from now on that there are $J$ such maximal edges.}By
assumption, no two geodesics with length $L_0$ are incident on the
same cone point. It is thus impossible to have an off-diagonal entry in
$A^2_N$ with a maximal contribution and the only way to obtain
these largest entries 
is to traverse one maximal oriented geodesic {$f_j$} and then return whence we came
on {$\bar f_j.$} 

The remaining entries are all bounded by either
$$
O( h^{(n-1)-(2L')(\Lambda-\ep)})
$$
in the case in which at least one edge traversed is not maximal, or
else by
$$
O( h^{(n-1)+1-2L_0 (\Lambda-\ep)})
$$
in the case that one multi-index $\beta$ (and hence one value of $j$) is nonzero.  In either case, we find
that the entry is bounded by $O(h^{\ep'})$ with 
\begin{equation}\label{epsilonprime}
\ep' \equiv \min\big \{(n-1)-2L'(\Lambda-\ep), (n-1)+1/2-2L_0 (\Lambda-\ep)\big\}.
\end{equation}
Note that this is a positive number, by definition of $\Lambda;$ the
$1/2$ term in the second expression is not optimal in the argument
above but will be necessary below.
Thus, if we write $A_N$ in block form with the edges $(f,0)$ and
$(\bar f, 0)$ with $f$ maximal listed first, we have
$$
A_N^2=Q_N^0+O(h^{\ep'})
$$
with
{$$
Q_N^0 = \left( \begin{array}{cccccc} \tilde{B}_1 &  &  & & \\
 & \ddots &  &  & \Scale[1.8]{0} & \\
 &  & \tilde{B}_J & & &\\
& & & 0 & &\\
& \Scale[1.8]{0} & &  & \ddots &\\
& & & & & 0
\end{array}\right)
$$}
where the $2 \times 2$ block {$\tilde{B}_j$} is given by
{$$
\tilde{B}_j= (A_N)_{(\bar f_j,0, f_j, 0)} (A_N)_{(f_j,0,\bar f_j,  0)} \Id_{2\times 2}
$$}
and where the number of these blocks equals the number of maximal
geodesics.  Replacing these matrix entries with their leading order
approximation, we now get the improvement
$$
A_N^2=Q_N+O(h^{\ep'})
$$
with
{$$
Q_N = \left( \begin{array}{cccccc} B_1 &  &  & &\\
 & \ddots &  &  &\Scale[1.8]{0} & \\
 &  & B_J & & &\\
& & & 0 & &\\
&\Scale[1.8]{0} & &  & \ddots &\\
& & & & & 0
\end{array}\right)
$$}
and where, {for each $j$} \footnote{{In the sequel,
    we will drop the index $j$ for readibility.}}
$$
B_j= C(0,\bar f_j, f_j) C(0, f_j, \bar f_j) h^{(n-1)} e^{2 i L_0
  \sqrt{z_h}/h}\Id_{2\times 2}.
$$

Here we have used the fact that approximating $(A_N)_{(f,0,\bar f,
  0)} $ by ${C(0, f, \bar f) h^{(n-1)/2} e^{ i L_0
  \sqrt{z_h}/h}}$ incurs an error of $O(h^{(n-1)/2+1/2-0} e^{ i L_0
  \sqrt{z_h}/h}).$  Hence for any $N$
sufficiently large, for  for $\sqrt{z_h} \in \Omega_\ep$ we have
\begin{align*}
(A_N)_{(\bar f,0, f, 0)} (A_N)_{(f,0,\bar f,  0)} - &C(0,\bar f, f) C(0, f, \bar f) h^{(n-1)} e^{2 i L_0
  \sqrt{z_h}/h}\\&=O(h^{(n-1)+1/2-0}e^{i 2L_0   \sqrt{z_h}/h})\\&=O(h^{\ep'}),
\end{align*}
with $\ep'$ given by \eqref{epsilonprime} above; this argument is where the
$1/2$ gain in the second term in the minimum taken in
\eqref{epsilonprime} is now optimal.

Now choose $N$ big enough that
$$
\frac{(n-1+N)}{2} -L_0 \Lambda \gg m_2+\ep',
$$
where we recall that $m_2$ is the decay rate required to show that $u_h$
could not be an $L^2$-normalized eigenfunction of $P_\theta.$
Then by \eqref{squaredmatrix} we have the simple equation
$$
(\Id-Q_N+O(h^{\ep'})) \bs = O(h^{m_2+\ep'}).
$$
Clearly, \emph{if} $\Id-Q_N$ is invertible with $(\Id-Q_N)^{-1}=O(h^{-\ep'/2})$
we can then invert to obtain
$$
\bs = O(h^{m_2+\ep'/2}),
$$
which is the desired estimate: if this holds, then $u_h$ could not
have been a normalized resonant state after all.

Thus in order for a resonance to {exist}, we must have, by contrast, a lower bound on the inverse of
$(\Id-Q_N)^{-1}.$ In particular, {for some maximal edge $f$, we must have}
$$
\abs{C(0,\bar f, f) C(0,f,\bar f) h^{(n-1)} e^{2 i L_0
  \sqrt{z_h}/h}-1}^{-1} \geq C h^{-\ep'/2},
$$
i.e.,
$$
C(0,\bar f, f) C(0,f,\bar f) h^{(n-1)} e^{2 i L_0
  \sqrt{z_h}/h} =1+O( h^{\ep'/2}).
$$
Taking $\log,$ this yields 
$$
2i L_0 \frac{\sqrt{z_h}}{h}+(n-1) \log h + \log C(0,\bar f, f) C(0,f,\bar f)\in 2\pi i \ZZ+ O(h^{\ep'/2}).
$$
The equality of imaginary parts yields for the constant $$C_{\Re}=-\Im
\log C(0,\bar f, f) C(0,f,\bar f)/2L_0,$$
$$
\Re \sqrt{z_h}  \in  h \big( C_{\Re} + \frac{\pi}{L_0} \ZZ \big) + O(h^{1+\ep'/2})
$$
while taking real parts gives for the constant $$C_{\Im}=\Re \log C(0,\bar f, f) C(0,f,\bar f)/2L_0$$
$$
\Im \sqrt{z_h}  =-\frac{(n-1)}{2L_0} \hlh + C_{\Im}h + O(h^{1+\ep'/2}).
$$
Recalling that semiclassical rescaling gave
$$
\Re\sqrt{z_h} =1,\ h=(\Re \lambda)^{-1},\ \frac{\Im\sqrt{z_h}}{h} = \Im \lambda,
$$
this yields the statements of the theorem.

\begin{rema}
As shown by Galkowski \cite{Ga:15}, if one of the diffraction coefficients
$C(0,\bar{f},f)$ does not vanish then the width of the resonance free 
logarithmic region depends on $L_0.$ Our argument shows when all these
coefficients vanish, we indeed obtain a larger resonance-free 
logarithmic region, so that this condition is sharp.  
\end{rema}

\appendix
\section{(Micro)-locality of $\sqrt{\Lap}$}\label{appendix:micro}
The aim of this appendix is to prove the two following facts for the
Laplace operator $\Delta$ on a manifold with conical singularities.

\begin{prop}\label{prop:appA}
Let $X$ be a manifold with conical singularities and $\Delta$ its 
self-adjoint Laplace operator (Friedrichs extension). Then 
\begin{enumerate}
\item \label{smoothing} For any open sets, $U,V\in X$ such that $U\cap V =\emptyset$, 
and $V\subset X^\circ,$ for any $N$, $\Delta^N\sqrt{\Delta}$ is
continuous from $L^2(V)$ into $L^2(U).$
\item \label{rootispseudo} For any open set $U$ such that
  $\overline{U}\subset X^\circ,$ 
$\sqrt{\Delta}$ seen as an operator from $H^1(U)$ into $L^2(U)$ 
is a (first order) pseudodifferential operator.    
\end{enumerate}
\end{prop}

Both results will follow from studying the heat kernel and using the
transform :
\[
\sqrt{\Delta} \,=\, \frac{\Delta}{\Gamma(\frac{1}{2})} \int_0^\infty e^{-t\Delta}
t^{-\frac{1}{2}}\, dt.
\]

First, we take $\rho \in C^{\infty}_0([0,+\infty))$
that is identically $1$ on $[0,2t_0]$ for some $t_0>0$ and write $\psi =1-\rho.$ 
Since 
\[
\int_0^\infty e^{-t\Delta} \psi(t) t^{-\frac{1}{2}}\, dt\,=\,
e^{-t_0\Delta} \int_0^{\infty} e^{-(t-t_0)\Delta}\psi(t) t^{-\frac{1}{2}}
  \, dt,
\]
we see that the operator that is defined by the former integral 
is smoothing. 

Hence, both claims will follow from the same claims where $\sqrt{\Delta}$
is replaced by 
\begin{equation*}
\Delta \int_0^\infty \rho(t) e^{-t\Delta} t^{-\frac{1}{2}}\, dt. 
\end{equation*}

Multiplying by $\Delta$ does not modify the
statements so that it suffices to study the operator-valued integral :

\begin{equation}\label{Irho}
\int_0^\infty e^{-t\Delta} \rho(t) t^{-\frac{1}{2}}\, dt
\end{equation} 
\hfill \\

\begin{proof}[Proof of \eqref{smoothing}] 
For any $a\in L^2(U)$, we define the distribution $T_a$ on $\RR\times
V$ by 
\[
\left( T_a , \phi(t) b(y) \right )_{\mathcal{D}'\times \mathcal{D}} \,=\, \int_0^\infty \langle a,
e^{-t\Delta}b\rangle_{L^2} \phi(t) dt.
\] 

Since $\lim_{t\rightarrow 0^+}\langle a,
e^{-t\Delta}b\rangle_{L^2} =0,$ a simple calculation shows that, in the distributional sense 
\[
(\partial_t + \Delta_y)T_a =0,~~\mbox{in}~~\mathcal{D}'(\RR\times V).
\]
By hypoellipticity in $\RR\times V$, it follows that $T_a$ is smooth.

Since $T_a$ vanishes identically for $t<0$
\[
\forall (a,b)\in L^2(U)\times L^2(V), ~t\mapsto \langle
e^{-t\Delta}a,b \rangle 
\] 
is smooth on $[0,\infty)$ and vanishes to infinite order at $0.$

In particular, for any $N$ and $k,$ the $N$-th derivative of the
latter function vanishes to order $k$ at $0.$ So the quantity 
\[
t^{-k} \langle \Delta^N e^{-t\Delta} a, b\rangle 
\]
is bounded on $(0,1].$

By the principle of uniform boundedness this implies that 
\[ \| \Delta^N e^{-t\Delta}\|_{L^2(V)\rightarrow L^2(U)} \,=\,O(t^k).\]

Plugging this bound into the integral \eqref{Irho} yields the result.
\end{proof}

\begin{proof}[Proof of \eqref{rootispseudo}]
We begin by choosing $\tilde{U}$ that is compactly embedded into $X^\circ$
and such that $U\subset \tilde{U}.$ We denote by $e$ the heat kernel
on $X$ and by $\tilde{e}$ the heat kernel on a complete smooth
Riemannian manifold $\tilde{X}$ in which $\tilde{U}$ is embedded.

We denote by $r$ the distribution on $\RR\times U\times U$ that is
defined by 
\[
\left( r, \phi\right)\,=\, \int_0^\infty \int_{U\times U}
(e(t,x,y)-\tilde{e}(t,x,y))\phi(t,x,y) dxdy~dt.
\] 
Observing that, for any $\phi$ 
\[
\int_{U\times U} (e(t,x,y)-\tilde{e}(t,x,y))\phi(t,x,y) dxdy
\underset{t\rightarrow 0}{\longrightarrow} 0,
\]
we obtain that, in $\mathcal{D}'(\RR\times U\times U)$ 
\[
(2\partial_t \,+\,\Delta_x\,+\,\Delta_y)r=0.
\]
So by hypoellipticity, $r$ is smooth on $\RR\times U\times U.$ 

Consequently, in \eqref{Irho}, if we replace $\Delta$ by the Laplace
operator on $\tilde{X}$, we make an error whose kernel is 
\[
\int_0^\infty \rho(t) r(t,x,y) t^{-\frac{1}{2}} dt.
\] 
Since $r$ is smooth and vanishes to infinite order at $t=0$ 
this integral is a smooth function of $x$ and $y.$

It follows that $\sqrt{\Delta}$ in $U$ coincides with
$\sqrt{\tilde{\Delta}}$ up to a smoothing operator.
\end{proof}

\section{Stationary phase}\label{appendix:statphase}

In this appendix we discuss the method of stationary phase when the
large parameter is allowed to be complex, with imaginary part
comparable to the logarithm of the real part.  We will parallel the
treatment and notation in \cite[Section 3.5]{Zw:12}.  The outcome will
be that we may treat the factor of $w_h$ below as a formal parameter, but we
have been unable to find a justification for these manipulations in
the published literature.

As before we write
$$\Omega_\ep\equiv \{w_h\colon
  (-\Lambda +\epsilon)\hlh <\Im w_h<0,\ \Re w_h \in [1-\ep,1+\ep] \}.
$$
For $a \in \CcI(\RR^n),$ $\varphi \in \CI(\RR^n)$ real valued and $w_h
\in \Omega_\ep,$ we define
$$
I_h(a,\varphi; w_h)\equiv \int_{\RR^n} e^{i\varphi w_h/h} \, a\, dx.
$$
Note that the exponential term may be polynomially growing in $h$
owing to the presence of the factor $w_h \in \CC.$  We
will use throughout the fact that $h/w_h=O(h)$ for $w_h \in \Omega_\ep.$

\begin{lemma}
Let $w_h \in \Omega_\ep.$ If $d \varphi \neq 0$ on $\supp a,$ then $I_h(a,\varphi; w_h)=O(h^\infty).$
\end{lemma}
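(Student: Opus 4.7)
The plan is to execute the usual nonstationary phase integration-by-parts argument, but to book-keep carefully the factor $|e^{i\varphi w_h/h}|$, which is now polynomially large in $h^{-1}$ because $w_h$ has nonzero imaginary part.

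First, since $d\varphi \neq 0$ on the compact set $\supp a$, I would introduce the first-order differential operator
$$
L \;=\; \frac{h}{iw_h}\,\frac{\nabla\varphi}{|\nabla\varphi|^2}\cdot \nabla,
$$
well defined on a neighborhood of $\supp a$, which satisfies $L\bigl(e^{i\varphi w_h/h}\bigr)=e^{i\varphi w_h/h}$ there. Its formal transpose has the schematic form $L^{t}=(h/w_h)\,M$, where $M$ is a first-order differential operator with smooth coefficients supported where $d\varphi\neq 0$, independent of $h$ and $w_h$. Integrating by parts $N$ times yields
$$
I_h(a,\varphi;w_h) \;=\; \int_{\RR^n} e^{i\varphi w_h/h}\,(L^t)^N a\,dx,
$$
with no boundary terms since $a$ is compactly supported, and $(L^t)^N a = (h/w_h)^N\,M^N a$ is a smooth, compactly supported function of $x$.

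Next I would bound the two factors. For $w_h\in\Omega_\ep$ we have $|w_h|\in[1-\ep,1+\ep]+O(\hlh)$, so $|h/w_h|\leq C h$ uniformly in $h$, and therefore
$$
\bigl\|(L^t)^N a\bigr\|_{L^\infty} \;\leq\; C_N\, h^N.
$$
On the other hand, on $\supp a$ the phase $\varphi$ is bounded, say $|\varphi|\leq R$, and the assumption $w_h\in\Omega_\ep$ gives $-\Im w_h\leq \Lambda\hlh$, hence
$$
\bigl|e^{i\varphi w_h/h}\bigr| \;=\; e^{-\varphi\,\Im w_h/h} \;\leq\; e^{R\Lambda\log(1/h)} \;=\; h^{-R\Lambda}.
$$
Combining these estimates,
$$
|I_h(a,\varphi;w_h)| \;\leq\; C_N\,h^{N-R\Lambda}\cdot \mathrm{vol}(\supp a).
$$
Since $R$ and $\Lambda$ are fixed and $N$ is arbitrary, this gives $I_h=O(h^\infty)$, as claimed.

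The one point requiring mild care is the contour-deformation/complex-parameter aspect: here no contour deformation is needed because the phase is purely real and the only complexity resides in the multiplier $w_h/h$, whose modulus is controlled above by $Ch^{-1}$ and below by $ch^{-1}$. This is the single step where one must resist the temptation to treat $w_h$ as if it were real; the estimate $|h/w_h|\leq Ch$, which is what drives the gain of one power of $h$ per integration by parts, is what makes the argument go through without any further analytic-continuation machinery.
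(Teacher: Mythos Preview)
Your proof is correct and follows essentially the same approach as the paper: introduce the operator $L=\dfrac{h}{iw_h}\dfrac{\nabla\varphi}{|\nabla\varphi|^2}\cdot\nabla$ and integrate by parts repeatedly, using $h/w_h=O(h)$. The paper's version is terser, leaving implicit the polynomial loss from $|e^{i\varphi w_h/h}|$ that you carefully bound; your explicit bookkeeping of the $h^{-R\Lambda}$ factor is a helpful clarification but not a different argument.
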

\begin{proof}
As in Lemma~3.14 of \cite{Zw:12}, we simply integrate by parts using
the operator
$$
L=\frac h{iw_h} \frac{1}{\smallabs{\pa \varphi}^2} \pa \varphi \cdot \pa,
$$
chosen so that
$$
L^k e^{i\varphi w_h/h}= e^{i\varphi w_h/h}.
$$
The integration by parts then gains $(h/w_h)^k=O(h^k).$
\end{proof}

Thus as in the usual case, we may (decomposing $a$ using a partition of unity) read off stationary phase
asymptotics as a sum of asymptotics associated to each critical point,
for a nondegenerate $\varphi;$ also we may use the Morse Lemma to
convert $\varphi$ into a diagonal quadratic form near each of those
critical points.  The only difficulty is then to compute quadratic
stationary phase asymptotics, as in Theorem~3.13 of \cite{Zw:12}.

\begin{lemma}\label{lemma:quadphase}
Let $w_h \in \Omega_\ep.$
Let $$\varphi(x)=\frac 12 \smallang{Qx,x}$$ be a quadratic phase, with
$Q$ a nonsingular, symmetric, real matrix.
For all $N \in \NN,$
$$
I_h(a,\varphi; w_h)=\bigg(\frac{2 \pi h}{w_h}\bigg)^{\frac n2} \frac{e^{\frac{i\pi}4 \sgn Q}
}{\abs{\det Q}^{1/2}} \bigg( \sum_{k=0}^{N-1} \frac{1}{k!}\bigg(\frac
h{w_h} \bigg)^{k}
\bigg( \frac{ \ang{Q^{-1}D, D}}{2i}\bigg)^k a(0) + O(h^N) \bigg).
$$
\end{lemma}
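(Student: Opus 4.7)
The strategy is to adapt the classical proof of stationary phase for real parameters (cf.\ \cite[Theorem~3.13]{Zw:12}) to the complex sector $\Omega_\ep$, replacing integration over $\RR^n$ by integration over a suitably rotated contour in $\mathbb{C}^n$ so that the complex Gaussian becomes a genuine damped Gaussian. Set $\zeta := w_h/h$; note that $|\zeta|\sim h^{-1}$ and $\arg\zeta = O(h\log(1/h))$, so the problem is a small perturbation of the standard case of a large positive real parameter.

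First, diagonalize $Q = R^T D R$ by an orthogonal change of variables $y=Rx$ and rescale each coordinate by $|D_{jj}|^{-1/2}$, reducing to a phase of the form $\tfrac{1}{2}\sum_j \epsilon_j y_j^2$ with $\epsilon_j \in \{\pm 1\}$, at the cost of a Jacobian factor $|\det Q|^{-1/2}$. Choose an almost-analytic extension $A$ of $a$ to a small fixed tubular neighborhood of $\RR^n$ in $\mathbb{C}^n$, with compact support in $\Re z$ and $\bar\partial A = O(|\Im z|^\infty)$. Rotate the contour for each variable $y_j$ from $\RR$ to the ray $\{e^{i\theta_j \epsilon_j} t : t \in \RR\}$ for a small fixed $\theta_j > 0$. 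Since $\arg\zeta$ is small, the resulting phase $\tfrac{i\zeta}{2}\sum_j \epsilon_j e^{2i\theta_j\epsilon_j} t_j^2$ has strictly negative real part on the rotated contour, making the rotated integrand a genuine damped Gaussian. By Stokes's theorem together with the infinite-order vanishing of $\bar\partial A$ on $\RR^n$, this rotation changes the integral only by an $O(h^\infty)$ error.

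On the rotated contour, Taylor-expand $A$ at the origin to order $2N-1$ with integral remainder, and integrate term by term against the Gaussian. Each polynomial moment integral is computed exactly using the Gaussian integral formula with complex coefficient (valid because the exponent has strictly negative real part; this agrees by analytic continuation with the formula for real positive exponent). Odd moments vanish by parity, and the even moments reassemble into $(\langle Q^{-1}D, D\rangle/(2i))^k a(0)$ via a standard combinatorial identity. The prefactor $(2\pi h/w_h)^{n/2} e^{i\pi \sgn Q/4}/|\det Q|^{1/2}$ arises as the standard Gaussian normalization, with the branch of $(w_h)^{-n/2}$ determined by continuity from the positive real axis. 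The Taylor remainder integrates to $O((h/w_h)^N) = O(h^N)$ by rescaling.

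The main technical obstacle is ensuring that all error estimates are uniform in $w_h \in \Omega_\ep$. Because $\arg\zeta \neq 0$, the modulus $|e^{i\zeta z^2/2}|$ grows in the off-axis direction, and one must verify that on the fixed tubular neighborhood this growth is dominated by the quadratic decay of the rotated Gaussian. Since $\arg\zeta = O(h\log(1/h))$ and the tube is of fixed size, the off-axis growth contributes at most a factor $h^{-O(1)}$, which is absorbed by the infinite-order vanishing of $\bar\partial A$ and by the gains obtained at each stage of the Taylor expansion.
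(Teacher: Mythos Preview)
Your approach is correct in outline and shares the high-level strategy with the paper---almost-analytic extension plus a contour deformation justified by Stokes---but the specific deformation you choose is different, and this makes your argument work harder than necessary.

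The paper rotates by the \emph{small} angle $\arg\sqrt{w_h}=O(\hlh)$ rather than by a fixed angle: it substitutes $z=\sqrt{w_h}\,x$, which turns $e^{i\varphi(x)w_h/h}$ into $e^{i\varphi(z)/h}$ with a \emph{real} large parameter, and replaces $a(x)$ by $\ta(z/\sqrt{w_h})$. Deforming this contour back to $\RR^n$ sweeps out a region on which $\lvert\Im(z/\sqrt{w_h})\rvert=O(\hlh)$ uniformly, so $\bar\partial\ta=O((\hlh)^N)$ everywhere on the deformation and the Stokes error is immediately $O(h^\infty)$---no Gaussian damping needs to be invoked at all. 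After the deformation one simply quotes the standard real-parameter result (Theorem~3.13 of \cite{Zw:12}), and the chain rule gives $\partial_x^\alpha\big(\ta(x/\sqrt{w_h})\big)\big|_{x=0}=w_h^{-|\alpha|/2}\partial^\alpha a(0)$, producing the powers of $h/w_h$ directly.

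Your fixed-angle rotation also works, but your Stokes estimate as written is too quick. The assertion that ``the off-axis growth contributes at most $h^{-O(1)}$, absorbed by the infinite-order vanishing of $\bar\partial A$'' is not literally correct: at angles $s\theta_j$ with $s$ bounded away from zero, $\bar\partial A$ is merely bounded, not small. What actually makes the estimate go through is that on the swept region $z=e^{is\theta_j}t$ one has $\lvert\Im z\rvert\sim s\lvert t\rvert$, so the bound $\lvert\bar\partial A\rvert\leq C_N(s\lvert t\rvert)^N$ combines with the Gaussian damping $e^{-c\lvert\zeta\rvert(s-s_0)t^2}$ (for $s>s_0\sim\hlh$) to give, after rescaling $t\mapsto t/\sqrt{\lvert\zeta\rvert s}$, a contribution $O(\lvert\zeta\rvert^{-(N+1)/2})=O(h^{(N+1)/2})$; the thin strip $s<s_0$ is handled separately since there $\lvert\Im z\rvert=O(\hlh)$. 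So your plan is salvageable, but it requires this splitting; the paper's choice of rotation angle avoids the issue entirely and lets one cite the classical result rather than recomputing the Gaussian moments.
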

\begin{proof}
Let $\ta$ denote an almost analytic extension of $a$ with support
in a small neighborhood (in $\CC^n$) of $\supp a$ (see \cite[Section
3.1]{Hormander1}, as well as, for instance, \cite[Chapter
8]{Dimassi-Sjostrand}).  Then 
\begin{equation}\begin{aligned}
I_h(a,\varphi; w_h)&= \int_{\RR^n} e^{i\varphi(x) w_h/h} \, a(x)\,
dx_1 \wedge \dots \wedge d x_n\\
&= \int_{\RR^n} e^{i\varphi(x\sqrt{w_h})/h} \, a(x)\,
dx_1 \wedge \dots \wedge d x_n\\
&= \int_{\Gamma} e^{i\varphi(z)/h} \, \ta(z/\sqrt{w_h})\, w_h^{-n/2} \, dz_1 \wedge \dots \wedge d z_n\,
\end{aligned}
\end{equation}
where $\Gamma$ is the complex contour $\{z_j =\sqrt{w_h} x_j,\ x_j \in \RR\}.$
Since $\ta$ is compactly supported, we may apply Stokes's theorem on
the domain
$$
\Upsilon =\big\{z \in \CC^n: z_j=((1-s) + s\sqrt{w_h}) x_j,\ x_j \in
\RR,\ s \in [0, 1]\big\}
$$
to obtain
\begin{multline}
  \label{eq:3}
I_h(a,\varphi; w_h)= \int_{\RR^n} e^{i\varphi(x)/h} \,
\ta(x/\sqrt{w_h})\, w_h^{-n/2} \, dx_1 \wedge \dots \wedge d x_n\\+
\iint_{\Upsilon} e^{i\varphi(z)/h} \,
\overline{\pa}\big[ \ta(z/\sqrt{w_h})\, w_h^{-n/2} \, dz_1 \wedge \dots \wedge d z_n\big].
\end{multline}

By almost-analyticity of $\ta,$ the latter integral is $O(h^\infty)$ since the support of the
integrand is compact and over this compact set $\Im (z/\sqrt{w_h})=O(\hlh)$ for $z \in
\Upsilon.$  The former integral is then an
ordinary stationary phase integral with quadratic phase to which we
may directly apply Theorem~3.13 of \cite{Zw:12}.  Note of course that
in applying the usual formula for the Gaussian integral, we are using
the fact that
\begin{equation}
\begin{aligned}
\frac{\pa^{\smallabs{\alpha}}}{\pa x^\alpha}\big( \ta(x/\sqrt{w_h})\big)\big\rvert_{x=0}
&=w_h^{-\abs{\alpha}/2} \frac{\pa^{\smallabs{\alpha}}}{\pa
  z^\alpha}\ta  (0)\\
&=w_h^{-\abs{\alpha}/2} \frac{\pa^{\smallabs{\alpha}}}{\pa
  x^\alpha}a  (0).
\end{aligned}
\end{equation}
\end{proof}

Assembling the foregoing results, we finally arrive at the desired
stationary phase expansion in general.  (Cf.\ Theorem~3.16 of \cite{Zw:12}.)
\begin{prop}\label{proposition:statphase}
Let $w_h \in \Omega_\ep,$ $a \in \CcI(\RR^n).$  Suppose $x_0$ is the
unique point in $\supp a$  at which $\pa
\varphi=0$ and that $\det \pa^2
\varphi(x_0)\neq 0.$  Then there exist differential operators
$A_{2k}(x,D)$ of order $\leq 2k$ such that for all $N \in \NN$
$$
I_h(a,\varphi; w_h) =\bigg( \sum_{k=0}^{N-1} A_{2k}(x,D)
  a(x_0) \bigg(\frac{h}{w_h}\bigg)^{k+n/2}\bigg) e^{\frac{i
      \varphi(x_0)w_h}{h}} +O(h^{N+n/2}).
$$
In particular, $$A_0=(2\pi)^{n/2} \abs{\det \pa^2 \varphi(x_0)}^{-1/2}
e^{\frac{i\pi}4 \sgn \pa^2 \varphi(x_0)}.$$
\end{prop}

\bibliographystyle{plain}
\bibliography{all}

\end{document}